\newtheorem{thm}{Theorem}[section]
\newtheorem{lem}[thm]{Lemma}
\newtheorem{cor}[thm]{Corollary}
\newtheorem{rmk}[thm]{Remark}
\newcommand{\Fn}{\mathbb{F}_{q^n}}
\newcommand{\Fs}{\mathbb{F}_{q^s}}
\newcommand{\Fp}{\mathbb{F}_{p}}
\newcommand{\Fpn}{\mathbb{F}_{p^n}}
\newcommand{\F}{\mathbb{F}_{q}}
\newcommand{\Ln}{\mathscr{L}_n(\Fn)}
\DeclareMathOperator{\im}{im}
\title[Compositional inverses, complete mappings, OLS and bent functions]{Compositional inverses, complete mappings, orthogonal Latin squares and bent functions}
\author{Aleksandr Tuxanidy and Qiang Wang}
\address{School of Mathematics and Statistics, Carleton
University,
 1125 Colonel By Drive, Ottawa, Ontario, K1S 5B6,
Canada.} 
\email{AleksandrTuxanidyTor@cmail.carleton.ca, wang@math.carleton.ca}
\keywords{Permutation polynomial, complete mapping, compositional inverse, 
linearized polynomial, Dickson matrix, trace, quasigroup, mutually orthogonal Latin square, 
$p$-ary bent vectorial function, Maiorana-McFarland class, finite fields.\\}
\thanks{PP (permutation polynomial); CPP (complete permutation polynomials or complete mappings); OLS (orthogonal Latin squares); MOLS (mutually orthogonal Latin squares)}
\thanks{The research of Aleksandr Tuxanidy and Qiang Wang is partially supported by OGS and NSERC, respectively, of Canada.}
\date{\today}
\begin{document}
\font\Bbb msbm10 at 12pt

\begin{abstract}
We study compositional inverses of permutation polynomials, complete mappings, mutually orthogonal Latin squares, and bent vectorial functions.
Recently it was obtained in \cite{wu} the compositional inverses of linearized permutation binomials over finite fields. It was also noted in \cite{tuxanidy} that computing inverses of bijections of  subspaces have applications in determining the compositional inverses of certain permutation classes related to linearized polynomials. In this paper we obtain compositional inverses of a class of linearized binomials permuting the kernel of the trace map.
As an application of this result, we give the compositional inverse of a class of complete mappings. This complete mapping class improves upon a recent construction given in \cite{wu and lin}. We also construct recursively a class of complete mappings involving multi-trace functions.
Finally we use these complete mappings to derive a set of mutually orthogonal Latin squares, and to construct a class of $p$-ary bent 
vectorial functions from the Maiorana-McFarland class.
\end{abstract}

\maketitle

\section{Introduction}

Let $q = p^m$ be the power of a prime number $p$, let $\F$ be a finite field with $q$ elements, and let $\F[x]$ be the ring of polynomials over $\F$. 
We call a polynomial $f \in \F[x]$ a {\em permutation polynomial} (PP) over $\F$ if it induces a permutation of $\F$ under evaluation. 
We denote the {\em composition} of two polynomials $f,g$ by $(f \circ g)(x) := f(g(x))$. 
It is clear that permutation polynomials over $\F$ form a group under composition and subsequent reduction modulo $x^q - x$ that is isomorphic to the symmetric group on $q$ letters. 
Thus for any permutation polynomial $f \in \F[x]$ there exists a unique $f^{-1} \in \F[x]$ such that $f(f^{-1}(x)) \equiv f^{-1}(f(x)) \equiv x \pmod{x^q-x}$.
We call $f^{-1}$ the {\em compositional inverse} of $f$ over $\F$. 


The construction of permutation polynomials over finite fields is an old and difficult subject that continues to attract interest due to their 
 applications in cryptography \cite{rivest,schwenk},
 coding theory \cite{ding13,chapuy}, and combinatorics \cite{ding}. 
 See also \cite{akbary0,akbary,akbary2,charpin,coulter,fernando,hou,hou1,kyureghyan,marcos,mullen,wang13,yuan,yuan1, zha,zieve}, and the references therein for some recent work in the area. However, the problem of 
 determining the compositional inverse of a permutation polynomial seems to be an even more complicated problem. 
 In fact, there are very few known permutation polynomials whose
 explicit compositional inverses have been obtained \cite{coulter1,tuxanidy, wu, wu and liu 2}, and the resulting expressions are usually of a complicated nature except for the classes of the permutation linear polynomials, 
 monomials, Dickson
 polynomials. 
In addition, see \cite{muratovic,wang} for the characterization of the inverse of permutations of $\F$ 
 with form $x^rf(x^s)$ where $s \mid (q-1)$. 

Of particular interest in the study of permutations of finite fields are the linearized polynomials. Polynomials with form $L(x) := \sum_{i=0}^{n-1} a_i x^{q^i}$ are called {\em linearized polynomials} or {\em $q$-polynomials}, 
which are $\F$-linear maps when seen as operators of $\Fn$. Note that $L$ is a permutation polynomial over $\Fn$ if and only if its associate {\em Dickson matrix} given by
 \begin{equation}
  D_L =  \left( \begin{array}{cccc}
                 a_0 & a_1 &\cdots & a_{n-1}\\
                 a_{n-1}^q & a_0^q & \cdots & a_{n-2}^q\\
                 \vdots & \vdots & & \vdots\\
                 a_1^{q^{n-1}} & a_2^{q^{n-1}} & \cdots & a_0^{q^{n-1}}
                \end{array} \right)
\end{equation}
 is non-singular \cite{lidl}. We denote by $\Ln$ the set of all $q$-polynomials over $\Fn$. 
 Recently Wu and Liu obtained in \cite{wu and liu} an expression for the compositional inverse of $L$ in terms of cofactors of $D_L$. Then using this result Wu computed in \cite{wu} the compositional inverses, 
 in explicit form, of arbitrary linearized permutation binomials over finite fields.

More recently, Tuxanidy and Wang showed in \cite{tuxanidy} that the problem of computing the compositional  inverses of certain classes of permutations is equivalent to obtaining the inverses of two other polynomials bijecting subspaces of the finite field, where
 one of these two is a linearized polynomial inducing a bijection
 between kernels of other linearized polynomials. 
 For this they showed in Theorem 2.5 of \cite{tuxanidy} how to obtain linearized polynomials inducing the inverse map over subspaces on which a linearized polynomial induces a bijection. This in fact amounts to
 solving a system of linear equations. Thus, in particular, it is of interest to obtain explicit compositional inverses of linearized permutations of subspaces. 
 
Denote by $T_{q^n|q} : \Fn \rightarrow \F$ the (linearized) {\em trace} map given by
$$
T_{q^n|q}(x) = \sum_{i=0}^{n-1} x^{q^i}.
$$
When it will not cause confusion, we abbreviate this with $T$.
 In Section 2 of this paper we determine a class of linearized binomials permuting the kernel of the trace map and proceed to obtain its inverses on the kernel. 
See Theorem~\ref{thm: InvKer} for more details.

{\em Complete permutation polynomials} (CPP) over $\F$, also called {\em complete mappings}, are permutation polynomials $f \in \F[x]$ such that $f(x) + x$ is also a permutation polynomial over $\F$. 
CPPs have recently become a strong source of interest due to their connection to combinatorial objects such as orthogonal Latin squares \cite{sade, simona}, 
and due to their applications in cryptography; in particular, in the construction of bent functions \cite{ribic, nyberg, simona, stanica}.
See also \cite{tu, wu and lin, wu: even cpp, zhang} and the references therein for some recent work in the area. 
In Section 3 we study complete mappings and give an improvement (Theorem \ref{thm: improv}) to Theorem 3.7 of \cite{wu and lin} by Wu-Lin. This result generalized some earlier corresponding ones found in \cite{chapuy0, simona, wu: even cpp, zhang}.  We also give a recursive construction of complete mappings involving multi-trace functions; see Corollary \ref{cor: recursive}.

As an application of Theorem \ref{thm: InvKer} where we obtained the compositional inverses of linearized binomials permuting the kernel of the trace, we derive in Section 4 the compositional inverse of the class of complete permutation polynomials in Theorem \ref{thm: improv} generalizing some of the classes recently studied in 
 \cite{chapuy0, simona, wu and lin, wu: even cpp, zhang}. Note that since inverses of complete mappings are also complete mappings, Theorem~\ref{thm: CPP inverse} and Corollary~\ref{cor: CPP inverse} imply the construction of a new, if rather complicated, class of of complete permutation polynomials.


In Section 5 we use the new class to construct a set of mutually orthogonal Latin squares. Finally in Section 6 we derive a class of $p$-ary bent vectorial functions
from the Maiorana-McFarland class by the means of our complete mapping.

Before we move on to the following sections let us fix the following notations and definitions. If we view $f \in \F[x]$ as a map of $\F$ and we are given a subset $V$ of $\F$, 
we mean by $f|_{V}$ the map 
obtained by restricting $f$ to $V$, and $f|^{-1}_V$ denotes the inverse map of $f|_{V}$. When the context is clear we may however denote by $f|_{V}^{-1}$ a polynomial
in $\F[x]$ inducing the inverse map of $f|_{V}$. 
When a polynomial $f$ is viewed as a mapping $\F \to \F$, 
we denote by $1/f$ the polynomial $ f^{q-2}$.
Similarly if $x$ is viewed as a point of $\F$, we denote $1/f(x) := f(x)^{q-2}$.
In this case  
we call $f^{-1}(x)$ the {\em preimage}
of $x$ under $f$.

\section{Inverses of linearized binomials permuting kernels of traces}

In this section we study the compositional inverses of binomials permuting the kernel of the trace map.
More precisely, given a positive integer $r < n$, consider the binomial $L_{c,r}(x) := x^{p^r} - cx \in \Fn[x]$, 
where $c \in \mathbb{F}_{q}$. Note that $L_{c,r}(\ker(T_{q^n|q})) \subseteq \ker(T_{q^n|q})$, where $\ker(T_{q^n|q}) = \{ \beta^{q} - \beta \mid \beta \in \Fn \}$ 
is the kernel of the additive map of $T_{q^n|q}$ on $\Fn$.
We would like to discover what are the necessary and sufficient conditions for $L_{c,r}$ to be a permutation of $\ker(T_{q^n|q})$, 
and in such cases obtain a polynomial in $\Fn[x]$ inducing the inverse map of $L_{c,r}|_{\ker(T_{q^n|q})}$.
We only need to consider the case when $L_{c,r}$ permutes $\Fn$ and the case when $L_{c,r}$ permutes $\ker(T_{q^n|q})$ but not $\Fn$.
The former case has already been tackled in \cite{wu} (see Theorem \ref{invLBP} here) and so we focus on the latter case. We give the result in Theorem \ref{thm: InvKer2} of Section 2.1.
In Corollary \ref{cor: InvKer2} we show that under some restrictions of the characteristic, $p$, and the extension degree, $n$, $L_{c,r}$ permutes $\ker(T_{q^n|q})$ for each 
$c \in \F$.
Then in Section 2.2 we explain the method used to obtain the result.

\subsection{Statement and proof of result}

The following result due to Wu gives the compositional inverses of linearized permutation binomials $x^{q^r} - cx$ where $c$ lies in the extension $\Fn$. 
In the case when $c$ lies in $\F$, the binomial is a permutation of $\ker(T_{q^n|q})$ and thus its inverse map over the kernel is clearly the restriction of the inverse over $\Fn$ to $\ker(T_{q^n|q})$.
We state this in Corollary \ref{cor: invLBP}. This accounts for the case when the binomial permuting the kernel of the trace has full rank.
Later in Theorem \ref{thm: InvKer} and \ref{thm: InvKer2} we tackle the case when the linearized binomial permutes the kernel of the trace map but not $\Fn$. 
Denote by $N_{q^n|q} : \Fn \rightarrow \F$ the {\em norm} function given by $N_{q^n|q}(x) = x^{(q^n-1)/(q-1)}$.

\begin{thm}[{\bf Theorem 2.1, \cite{wu}}]\label{invLBP}
Let $c \in \Fn^*$ and let $d:= (n,r)$. Then $L_{c,r}(x) := x^{q^r} - cx \in \Fn[x]$ permutes $\Fn$ if and only if $N_{q^n|q^d}(c) \neq 1$, in which case the compositional inverse
of $L_{c,r}$ over $\Fn$ is given by
\[L_{c,r}^{-1}(x)=\frac{N_{q^n|q^d}(c)}{1 - N_{q^n|q^d}(c)}\sum_{i=0}^{\frac{n}{d}-1}
c^{-\frac{q^{(i+1)r}-1}{q^r-1}}x^{q^{ir}}.\]
\end{thm}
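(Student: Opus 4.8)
The plan is to establish the two assertions separately, using throughout that $L_{c,r}$ is $\F$-linear as an operator on $\Fn$, so that it permutes $\Fn$ if and only if its kernel is trivial.

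For the permutation criterion, I would note that a nonzero root of $L_{c,r}$ satisfies $x^{q^r}=cx$, i.e. $x^{q^r-1}=c$; hence $L_{c,r}$ fails to be injective exactly when $c$ lies in the image of the group homomorphism $\phi\colon\Fn^*\to\Fn^*$, $\phi(x)=x^{q^r-1}$. Its kernel is $\{x:x^{q^r}=x\}=\mathbb{F}_{q^{\gcd(n,r)}}^*=\mathbb{F}_{q^d}^*$, of order $q^d-1$, so $\im\phi$ is the unique subgroup of the cyclic group $\Fn^*$ of order $(q^n-1)/(q^d-1)$, that is, $\{y:y^{(q^n-1)/(q^d-1)}=1\}$. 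Therefore $c\in\im\phi$ precisely when $N_{q^n|q^d}(c)=1$, and $L_{c,r}$ permutes $\Fn$ if and only if $N_{q^n|q^d}(c)\neq1$.

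For the inverse, write $y=L_{c,r}(x)$, so $x^{q^r}=cx+y$, and apply the Frobenius $x\mapsto x^{q^{kr}}$ to obtain the first-order recurrence $x^{q^{(k+1)r}}=c^{q^{kr}}x^{q^{kr}}+y^{q^{kr}}$. Solving it from the initial term $x^{q^0}=x$ gives $x^{q^{kr}}=c^{S_k}x+\sum_{i=0}^{k-1}c^{S_k-S_{i+1}}y^{q^{ir}}$, where $S_k:=\frac{q^{kr}-1}{q^r-1}=1+q^r+\cdots+q^{(k-1)r}$. The key is to take $k=n/d$: as $d=\gcd(n,r)$ we have $(n/d)r=\operatorname{lcm}(n,r)$, a multiple of $n$, so $x^{q^{(n/d)r}}=x$ and the left side collapses to $x$, yielding $(1-c^{S_{n/d}})x=\sum_{i=0}^{n/d-1}c^{S_{n/d}-S_{i+1}}y^{q^{ir}}$.

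The crux is the identity $c^{S_{n/d}}=N_{q^n|q^d}(c)=:N$. Reducing exponents modulo $q^n-1$ (valid since $c^{q^n}=c$) one gets $c^{S_{n/d}}=\prod_{j=0}^{n/d-1}c^{q^{jr}}$; because $\gcd(r/d,n/d)=1$, the residues $jr\bmod n$ for $0\le j<n/d$ run exactly through $0,d,2d,\ldots,n-d$, so the product equals $c^{1+q^d+\cdots+q^{n-d}}=c^{(q^n-1)/(q^d-1)}=N$. Substituting back turns each coefficient $c^{S_{n/d}-S_{i+1}}$ into $N\,c^{-S_{i+1}}$, and dividing by $1-N\neq0$ produces $x=\frac{N}{1-N}\sum_{i=0}^{n/d-1}c^{-(q^{(i+1)r}-1)/(q^r-1)}y^{q^{ir}}$, which is the asserted formula. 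Since this derivation shows $G(L_{c,r}(x))=x$ identically and $L_{c,r}$ is a bijection of the finite set $\Fn$, the displayed $G$ is genuinely $L_{c,r}^{-1}$. I expect the exponent bookkeeping---verifying that $j\mapsto jr\bmod n$ bijects onto the multiples of $d$ and that the telescoping exponents collapse as claimed---to be the main obstacle.
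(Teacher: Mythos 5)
Your proof is correct, but there is nothing in the paper to compare it against: the paper states this result purely as an import, Theorem 2.1 of \cite{wu}, and never proves it. The meaningful comparison is therefore with the method of \cite{wu} itself and with what this paper does for the analogous kernel statement, Theorem \ref{thm: InvKer}. There, the inverse is obtained via Dickson matrices: \cite{wu} works with cofactors of $D_{L_{c,r}}$ (building on \cite{wu and liu}), and Section 2.2 of this paper solves the linear system $\bar{d}D_{L_{c,r}} = v_{q^r,n}\left(x - (n/s)^{-1}T_{q^{nr}|q^{sr}}(x)\right)$ supplied by Theorem \ref{thm: LinearizedInverse}, while the formal proof of Theorem \ref{thm: InvKer} merely verifies the guessed formula by composing and telescoping. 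Your route is genuinely different and more elementary: you obtain the permutation criterion from the image of the power map $x\mapsto x^{q^r-1}$ on the cyclic group $\Fn^*$, and you \emph{derive} rather than verify the inverse, by iterating $x^{q^{(k+1)r}} = c^{q^{kr}}x^{q^{kr}} + y^{q^{kr}}$ up to $k=n/d$, where $n \mid (n/d)r$ collapses the left-hand side, and then invoking $c^{S_{n/d}} = N_{q^n|q^d}(c)$ (your residue argument for this identity is right, since $jr \bmod n$ runs over $0,d,\dots,n-d$ exactly once). What your approach buys is a short, self-contained, constructive derivation that explains where the coefficients $Nc^{-S_{i+1}}/(1-N)$ come from; what the Dickson-matrix approach buys is uniformity --- it extends to binomials that permute only a subspace such as $\ker(T_{q^n|q^s})$, the situation of Theorem \ref{thm: InvKer}, where your collapsing argument alone would not suffice and the idempotent-projection input of Theorem \ref{thm: LinearizedInverse} becomes necessary.
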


\begin{cor}\label{cor: invLBP}
 Let $q = p^m$ be a power of a prime number $p$, let $n,r,$ be positive integers and denote $d:= (nm,r)$. If $c \in \F$ satisfies $N_{q^n|p^d}(c) \neq 1$, 
 then $L_{c,r}(x) := x^{p^r} - cx \in \Fn[x]$ permutes $\ker(T_{q^n|q})$, having a compositional inverse
 over $\ker(T_{q^n|q})$ given by 
 \[L_{c,r}^{-1}(x)=\frac{N_{q^n|p^d}(c)} {1 - N_{q^n|p^d}(c)}\sum_{i=0}^{\frac{nm}{d}-1}
c^{-\frac{p^{(i+1)r}-1}{p^r-1}}x^{p^{ir}}.\]
\end{cor}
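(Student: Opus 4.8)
The plan is to obtain this as a direct specialization of Theorem~\ref{invLBP}, exploiting that $\Fn = \mathbb{F}_{p^{nm}}$ may be regarded as a degree-$nm$ extension of the prime field $\Fp$ rather than as a degree-$n$ extension of $\F$. Under this viewpoint the binomial $L_{c,r}(x) = x^{p^r} - cx$ is precisely of the form treated in Theorem~\ref{invLBP} with the base field $\F$ there replaced by $\Fp$ and the extension degree $n$ replaced by $nm$. Accordingly the gcd $(n,r)$ occurring there becomes $d = (nm,r)$, and the relevant norm is $N_{p^{nm}|p^d} = N_{q^n|p^d}$ since $p^{nm} = q^n$. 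First I would invoke Theorem~\ref{invLBP} in this guise to conclude that the hypothesis $N_{q^n|p^d}(c)\neq 1$ is exactly the condition for $L_{c,r}$ to permute all of $\Fn$, and that in that case the displayed polynomial induces the inverse of $L_{c,r}$ on the whole of $\Fn$. (The degenerate value $c=0$ merely gives a power of the Frobenius and may be dismissed at once.)

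Next I would verify that $L_{c,r}$ stabilizes $\ker(T_{q^n|q})$ and deduce from this the permutation and inverse statements over the kernel. Because $c\in\F$ and the map $y\mapsto y^{p^r}$ is additive, one computes $T_{q^n|q}(L_{c,r}(x)) = T_{q^n|q}(x)^{p^r} - c\,T_{q^n|q}(x)$, so $T_{q^n|q}(x)=0$ forces $L_{c,r}(x)\in\ker(T_{q^n|q})$; this is the inclusion $L_{c,r}(\ker(T_{q^n|q}))\subseteq\ker(T_{q^n|q})$ already recorded in the introduction to this section. Since $L_{c,r}$ is a bijection of $\Fn$, its restriction to the finite subspace $\ker(T_{q^n|q})$ is injective and maps into $\ker(T_{q^n|q})$, hence is a bijection of $\ker(T_{q^n|q})$; thus $L_{c,r}$ permutes $\ker(T_{q^n|q})$. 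Finally, as the displayed polynomial inverts $L_{c,r}$ on all of $\Fn$, it inverts it in particular on $\ker(T_{q^n|q})$, and therefore serves as the compositional inverse of $L_{c,r}|_{\ker(T_{q^n|q})}$, exactly as claimed.

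I do not expect a genuine obstacle here: the result is a specialization of Theorem~\ref{invLBP} combined with the elementary fact that $\F$-scalars let $L_{c,r}$ commute with the trace. The only point demanding care is the change-of-base bookkeeping, namely checking that applying Theorem~\ref{invLBP} over $\Fp$ really does produce $d=(nm,r)$, the norm $N_{q^n|p^d}$, the summation range $0\le i\le \tfrac{nm}{d}-1$, and the exponents $c^{-(p^{(i+1)r}-1)/(p^r-1)}$ precisely as displayed.
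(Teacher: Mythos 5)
Your proposal is correct and follows essentially the same route as the paper: substitute $q \mapsto p$, $n \mapsto nm$ in Theorem~\ref{invLBP} to get that $L_{c,r}$ permutes $\Fn$ with the displayed inverse, then use the stability $L_{c,r}(\ker(T_{q^n|q})) \subseteq \ker(T_{q^n|q})$ (which the paper records at the start of Section~2 and you verify directly via $c \in \F$) to conclude that the restriction is a bijection of the kernel inverted by the same polynomial. The only difference is cosmetic: you spell out the commutation with the trace and the $c=0$ degeneracy, which the paper leaves implicit.
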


\begin{proof}
Substituting $q$ and $n$ with $p$ and $nm$, respectively, in Theorem \ref{invLBP}, 
we obtain that $L_{c,r}$ permutes $\Fn$. But since $L_{c,r}(\ker(T_{q^n|q})) \subseteq \ker(T_{q^n|q}) \subseteq \Fn$, it follows that $L_{c,r}$ permutes $\ker(T_{q^n|q})$. It now suffices to 
pick $L_{c,r}^{-1}$ as a compositional inverse of $L_{c,r}$ over $\ker(T_{q^n|q})$, which is obtained from Theorem \ref{invLBP} through the aforementioned substitutions. 
\end{proof}

We now focus on the case when the linearized binomial permutes the kernel of the trace map but does not permute $\Fn$. We need the following lemma.

\begin{lem}\label{lem: norm identity}
 Let $n,r,s,$ be positive integers such that $s \mid n$ and $d := (n,r) = (s,r)$. Then the following two identities hold.
 
 (i) $T_{q^{n}|q^s} = T_{q^{nr/d}|q^{sr/d}}|_{\Fn}$;
 
 (ii) $N_{q^s|q^d} = N_{q^{sr/d}|q^r}|_{\Fs}$.
\end{lem}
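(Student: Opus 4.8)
The plan is to reduce both identities to elementary reindexing, the engine being the arithmetic consequence of the double hypothesis $d = (n,r) = (s,r)$. First I would fix notation by writing $\beta := s/d$, $\gamma := r/d$, and $\delta := n/s$. From $(s,r) = d$ we get $(\beta,\gamma) = 1$, and combining $s \mid n$ with $(n,r) = d$ (writing $n = d\alpha$, noting $\beta \mid \alpha$ and $\delta = \alpha/\beta$, so $\delta \mid \alpha$ while $(\alpha,\gamma)=1$) yields $(\delta,\gamma) = 1$ as well. The one arithmetic fact I would isolate is
$$(n,\, sr/d) = s,$$
which follows from $sr/d = s\gamma$, $n = s\delta$, and $(s\delta,\, s\gamma) = s(\delta,\gamma) = s$. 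I would also note at the outset that both right-hand sides are genuine restrictions, since $\Fn \subseteq \mathbb{F}_{q^{nr/d}}$ and $\Fs \subseteq \mathbb{F}_{q^{sr/d}}$ both reduce to $d \mid r$, which holds because $d = (n,r)$.

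For part (i): since both maps are evaluated only on $\Fn$, where $x^{q^n} = x$, every exponent of $q$ may be read modulo $n$. Expanding definitions, $T_{q^n|q^s}$ becomes $\sum x^{q^{e}}$ with $e$ running over $\{0, s, 2s, \dots, (\delta-1)s\}$, i.e. over the cyclic subgroup $\langle s\rangle$ of $\mathbb{Z}/n\mathbb{Z}$, while $T_{q^{nr/d}|q^{sr/d}}$ becomes $\sum x^{q^{e}}$ with $e$ running over $\{0,\, sr/d,\, 2sr/d,\dots\} = \langle sr/d\rangle$. The key fact $(n, sr/d) = s$ gives $\langle sr/d\rangle = \langle s\rangle$, a subgroup of order $n/s = \delta$; and since the summation index in the second sum runs over exactly $\delta$ consecutive values, its exponent set hits each element of $\langle s\rangle$ precisely once. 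Hence the two sums coincide after reindexing, which is (i).

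For part (ii): both sides are evaluated only on $\Fs$, where $x^{q^s - 1} = 1$, so exponents may be read modulo $q^s - 1$ (the case $x = 0$ is trivial). I would pass to the common base $Q := q^d$, so that $q^s = Q^{\beta}$, $q^r = Q^{\gamma}$, and $q^{sr/d} = Q^{\beta\gamma}$. The two exponents then become the geometric sums $(Q^{\beta}-1)/(Q-1) = \sum_{j=0}^{\beta-1} Q^{j}$ and $(Q^{\beta\gamma}-1)/(Q^{\gamma}-1) = \sum_{i=0}^{\beta-1} Q^{i\gamma}$. Reducing modulo $Q^{\beta}-1 = q^s - 1$ replaces $Q^{i\gamma}$ by $Q^{\,i\gamma \bmod \beta}$, and because $(\beta,\gamma) = 1$ the map $i \mapsto i\gamma \bmod \beta$ permutes $\{0,\dots,\beta-1\}$; thus the second sum reduces to the first modulo $q^s - 1$, so the two norm exponents agree as functions on $\Fs$, proving (ii).

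The main obstacle — really the only substantive point — is establishing the gcd identity $(n, sr/d) = s$ together with the two coprimalities $(\beta,\gamma) = 1$ and $(\delta,\gamma) = 1$ cleanly from the hypothesis $d = (n,r) = (s,r)$; once these are in hand, both parts are a matter of bookkeeping (a subgroup-of-$\mathbb{Z}/n\mathbb{Z}$ argument for the trace, and a permutation-of-residues argument modulo $q^s-1$ for the norm). The parallel structure of the two parts — coprimality of $\gamma$ with $\beta$ versus with $\delta$ driving a reindexing in each case — is worth emphasizing, since it makes clear that both identities are manifestations of the same phenomenon.
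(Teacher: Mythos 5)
Your proof is correct and follows essentially the same route as the paper's: both parts are elementary reindexing arguments driven by the coprimalities $(n/s,\,r/d)=1$ and $(s/d,\,r/d)=1$ extracted from the hypothesis $d=(n,r)=(s,r)$. Your packaging of (i) as the subgroup identity $\langle sr/d\rangle=\langle s\rangle$ in $\mathbb{Z}/n\mathbb{Z}$ via $(n,\,sr/d)=s$, and of (ii) as a congruence of integer exponents modulo $q^s-1$, is only a cosmetic variant of the paper's direct manipulation of the residue sets $\{ks \bmod n\}$ and $\{kd \bmod s\}$.
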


\begin{proof}
(i)
Since $(n/d, r/d) = 1$ and $n/s \mid n/d$ (implying $(n/s, r/d) = 1$), we have $\{ k \pmod{n/s} \mid 0 \leq k \leq n/s - 1 \} = \{ k r/d \pmod{n/s} \mid 0 \leq k \leq n/s - 1\}$
from which it follows that
$\{ ks \pmod{n} \mid 0 \leq k \leq n/s - 1 \} = \{ k sr/d \pmod{n} \mid 0 \leq k \leq n/s - 1\}$. Hence, for any $\alpha \in \Fn$, we get
$$T_{q^{n}|q^s}(\alpha) := \sum_{k=0}^{\frac{n}{s} - 1} \alpha^{q^{ks}} = \sum_{k=0}^{\frac{n}{s} - 1} \alpha^{q^{ksr/d}} =: T_{q^{nr/d}|q^{sr/d}}(\alpha)$$
as required.

(ii) Since $(s/d, r/d) = 1$, we have $\{ k \pmod{s/d} \mid 0 \leq k \leq s/d - 1 \} = \{ k r/d \pmod{s/d} \mid 0 \leq k \leq s/d - 1\}$, implying 
$\{ kd \pmod{s} \mid 0 \leq k \leq s/d - 1 \} = \{ k r \pmod{s} \mid 0 \leq k \leq s/d - 1\}$. Thus, for any $\beta \in \Fs$, we obtain
$$
N_{q^s|q^d}(\beta) := \beta^{\sum_{k=0}^{\frac{s}{d}-1} q^{kd}} = \beta^{\sum_{k=0}^{\frac{s}{d}-1} q^{kr}} =: N_{q^{sr/d}|q^r}(\beta).
$$
\end{proof}

\begin{thm}\label{thm: InvKer}
 Let $q = p^m$ be a power of a prime number $p$, let $n,r,s$, be positive integers such that $s \mid n$ and $d:= (n,r) = (s,r)$, and let $c \in \Fs$ such that $N_{q^s|q^d}(c) = 1$. 
 Then the binomial $L_{c,r}(x) := x^{q^r} - cx \in \Fn[x]$ induces a permutation of $\ker(T_{q^n|q^s})$ if and only if $p \nmid n/s$. In this case the compositional inverse of $L_{c,r}$ 
 over $\ker(T_{q^n|q^s})$ is given by 
 $$
 L_{c,r}|_{\ker(T_{q^n|q^s})}^{-1}(x) = \sum_{j=0}^{\frac{s}{d}-1} c^{-\frac{q^{(j+1)r} - 1}{q^r - 1}} \left(  \left(\dfrac{n}{s} \right)^{-1} \sum_{k=1}^{\frac{n}{s} - 1} k  x^{q^{\frac{ksr}{d}}}\right)^{q^{jr}}.
 $$
\end{thm}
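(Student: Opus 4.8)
The plan is to settle the permutation criterion first, then build the inverse by a multiplicative change of variable that strips off the scalar $c$ and reduces everything to the ``scalar--free'' binomial $x^{q^r}-x$.

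\medskip

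\noindent\textbf{Permutation criterion.} Write $K := \ker(T_{q^n|q^s})$. Since $c \in \Fs$ and $T_{q^n|q^s}(\alpha^{q^r}) = T_{q^n|q^s}(\alpha)^{q^r}$, one checks directly that $L_{c,r}(K) \subseteq K$; as $L_{c,r}$ is additive and $K$ is finite, $L_{c,r}$ permutes $K$ if and only if $\ker(L_{c,r}) \cap K = \{0\}$. To compute $\ker(L_{c,r})$ I would invoke Hilbert's Theorem~90. Because $(s/d,r/d)=1$, the map $\tau\colon x \mapsto x^{q^r}$ generates $\mathrm{Gal}(\Fs/\mathbb{F}_{q^d})$, and the norm attached to this generator is precisely $N_{q^{sr/d}|q^r} = N_{q^s|q^d}$ on $\Fs$ by Lemma~\ref{lem: norm identity}(ii); hence $N_{q^s|q^d}(c)=1$ produces $b \in \Fs^*$ with $b^{q^r}=cb$. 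Any $\alpha$ with $\alpha^{q^r}=c\alpha$ then satisfies $(\alpha/b)^{q^r}=\alpha/b$, so $\alpha/b \in \Fn \cap \mathbb{F}_{q^r} = \mathbb{F}_{q^d}$, giving $\ker(L_{c,r}) = b\,\mathbb{F}_{q^d} \subseteq \Fs$. Now $T_{q^n|q^s}$ restricts on $\Fs$ to multiplication by $n/s$, so $\ker(L_{c,r}) \cap K = b\,\mathbb{F}_{q^d} \cap \ker\!\big(T_{q^n|q^s}|_{\Fs}\big)$ is trivial when $p \nmid n/s$ and equals $b\,\mathbb{F}_{q^d} \neq \{0\}$ when $p \mid n/s$, which is the asserted equivalence.

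\medskip

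\noindent\textbf{Reduction and inversion.} For the inverse, substitute $x = by$: a one--line computation gives $L_{c,r}(by) = b^{q^r}(y^{q^r}-y)$, that is, $L_{c,r} = \mu_{b^{q^r}} \circ M \circ \mu_{b^{-1}}$, where $M(y):=y^{q^r}-y$ and $\mu_t$ denotes multiplication by $t$. Since $b \in \Fs$, each of $\mu_{b^{\pm 1}},\mu_{b^{\pm q^r}}$ preserves the $\Fs$--space $K$, so inverting $L_{c,r}$ on $K$ reduces to inverting the scalar--free $M$ there. Writing $\sigma(x)=x^{q^r}$ and $\rho := \sigma^{s/d}$, two telescoping identities accomplish this: $M \circ S = \rho - \id$ with $S := \sum_{i=0}^{s/d-1}\sigma^i$, and $(\rho-\id)\circ P = \id - (n/s)^{-1} T_{q^n|q^s}$ with $P := (n/s)^{-1}\sum_{k=1}^{n/s-1} k\,\rho^k$; here Lemma~\ref{lem: norm identity}(i) identifies $T_{q^n|q^s} = \sum_{k=0}^{n/s-1}\rho^k$ and $\rho^{n/s}=\id$. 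All three operators are polynomials in $\sigma$, hence commute, so $S \circ P \circ M = \id - (n/s)^{-1} T_{q^n|q^s}$, which is the identity on $K$.

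\medskip

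\noindent\textbf{Assembling the formula.} I would then recognize the claimed expression as $G := \mu_b \circ S \circ \mu_{b^{-q^r}} \circ P$. Indeed, using $c = b^{q^r-1}$ and telescoping the exponent, $c^{-\frac{q^{(j+1)r}-1}{q^r-1}} = b^{-(q^{(j+1)r}-1)} = b\cdot\sigma^j(b^{-q^r})$, while $(P(x))^{q^{jr}} = \sigma^j(P(x))$, so each summand of $G$ equals $b\,\sigma^j\!\big(b^{-q^r}P(x)\big)$. Hence $G \circ L_{c,r} = \mu_b\,S\,\mu_{b^{-q^r}}\,P\,\mu_{b^{q^r}}\,M\,\mu_{b^{-1}}$. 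The crucial point is that $b^{q^r} \in \Fs$ is fixed by $\rho$, so $P$ commutes with $\mu_{b^{q^r}}$ and the two inner scalars cancel, leaving $\mu_b\,(S P M)\,\mu_{b^{-1}} = \mu_b\big(\id-(n/s)^{-1}T_{q^n|q^s}\big)\mu_{b^{-1}}$, which is the identity on $K$. Since $L_{c,r}$ permutes $K$, this yields $G|_K = L_{c,r}|_K^{-1}$. I expect the main obstacle to be exactly the \emph{non--commutation} of $\sigma$ with multiplication by $c$; the entire strategy is arranged to circumvent it, first via the $b$--conjugation down to the scalar--free $M$, and then via the observation that the single surviving scalar $\mu_{b^{q^r}}$ commutes with $P$ because $\rho$ fixes $\Fs$ pointwise.
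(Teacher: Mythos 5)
Your proposal is correct, and it takes a genuinely different route from the paper. The paper proves necessity of $p \nmid n/s$ by contradiction (if $p \mid n/s$ then $\Fs \subseteq \ker(T_{q^n|q^s})$, forcing $L_{c,r}$ to permute $\Fs$, which $N_{q^s|q^d}(c)=1$ forbids), and then gets sufficiency and the inverse simultaneously by brute force: it plugs the stated formula into $L_{c,r}$ and telescopes the resulting double sum, using $c^{(q^{sr/d}-1)/(q^r-1)} = N_{q^s|q^d}(c) = 1$ at the key cancellation and Lemma~\ref{lem: norm identity}(i) to recognize $T_{q^n|q^s}$ at the end; the formula itself is derived separately (Section 2.2) by solving a linear system against the Dickson matrix of $L_{c,r}$ via Theorem 2.5 of \cite{tuxanidy}. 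You instead make the hypothesis $N_{q^s|q^d}(c)=1$ do conceptual work: Hilbert 90 supplies $b \in \Fs^*$ with $c = b^{q^r-1}$, which gives the exact kernel $\ker(L_{c,r}) = b\,\mathbb{F}_{q^d} \subseteq \Fs$ (settling both directions of the permutation criterion at once, since $T_{q^n|q^s}$ acts on $\Fs$ as multiplication by $n/s$), and then the conjugation $L_{c,r} = \mu_{b^{q^r}} \circ (\sigma - \id) \circ \mu_{b^{-1}}$ reduces inversion to the scalar-free binomial, handled by two telescoping identities in the commutative ring of polynomials in $\sigma$; the coefficients $c^{-\frac{q^{(j+1)r}-1}{q^r-1}} = b^{1-q^{(j+1)r}}$ then emerge naturally rather than being verified after the fact. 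Your argument explains where both the formula and the norm-one hypothesis come from, at the cost of invoking Hilbert 90 and requiring care with the non-commutation of Frobenius and scalars (which you handle correctly, since $\rho$ fixes $\Fs$ pointwise); the paper's verification is more elementary and self-contained, mirrors the technique of \cite{wu}, and its Dickson-matrix derivation is a systematic tool that extends to other subspace-inversion problems.
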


\begin{proof}
Assume that $L_{c,r}$ does not permute $\Fs$, 
i.e., $N_{q^s|q^d}(c) = 1$. Now suppose on the contrary that $p \mid n/s$. Then for any $k \in \Fs$, we have $T_{q^n|q^s}(k) = kn/s = 0$; 
hence $\Fs \subseteq \ker(T_{q^n|q^s})$. Then noting $L_{c,r}(\Fs) \subseteq \Fs$, we obtain that $L_{c,r}$ permutes $\Fs$, a contradiction. 
Necessarily, if $N_{q^s|q^d}(c) = 1$ and $L_{c,r}$ permutes $\ker(T_{q^n|q^s})$, then $p \nmid n/s$. To show that these are also sufficient conditions for $L_{c,r}$ to permute
$\ker(T_{q^n|q^s})$, it suffices to prove that $L_{c,r}|_{\ker(T_{q^n|q^s})}^{-1}$, given above, induces the inverse of $L_{c,r}|_{\ker(T_{q^n|q^s})}$. 
First observe that $N_{q^{s}|q^d}(c)^{-q^r} = N_{q^{s}|q^d}(c)$ since $d \mid r$. 
Moreover $c^{(q^{sr/d} - 1)/(q^r - 1)} = N_{q^s|q^d}(c) = 1$ by Lemma \ref{lem: norm identity}.
For the sake of brevity denote $R(x) := \left(n/s \right)^{-1} \sum_{k=1}^{\frac{n}{s} - 1} k  x^{q^{ksr/d}}$. Now, for all $x \in \ker(T_{q^n|q^s})$, we have
\begin{equation*}
 \begin{split}
  L_{c,r}\left( L_{c,r}|_{\ker(T_{q^n|q^s})}^{-1}(x)  \right)
 &=
 L_{c,r}|_{\ker(T_{q^n|q^s})}^{-1}(x)^{q^r} - c L_{c,r}|_{\ker(T_{q^n|q^s})}^{-1}(x)\\
 &=
 \sum_{j=0}^{\frac{s}{d}-1} \left( c^{-q^r\left(\frac{q^{(j+1)r} - 1}{q^r - 1}\right)} R(x)^{q^{(j+1)r}} - c^{-\frac{q^{(j+1)r} - 1}{q^r - 1}+1}R(x)^{q^{jr}}  \right)\\
 &=
 \sum_{j=1}^{\frac{s}{d}} c^{-q^r\left(\frac{q^{jr} - 1}{q^r - 1}\right)} R(x)^{q^{jr}} - \sum_{j=0}^{\frac{s}{d}-1} c^{-q^r\left(\frac{q^{jr} - 1}{q^r - 1}\right)}R(x)^{q^{jr}}\\
 &=
 c^{-q^r\left(\frac{q^{\frac{s}{d}r} - 1}{q^r - 1}\right)} R(x)^{q^{\frac{s}{d}r}}  - R(x)\\
 &= 
 R(x)^{q^{\frac{s}{d}r}}  - R(x)\\
 &=
 \left(\dfrac{n}{s} \right)^{-1} \sum_{k=1}^{\frac{n}{s} - 1} k \left( x^{q^{(k+1)\frac{sr}{d}}} - x^{q^{k\frac{sr}{d}}} \right)  \\
 &=
 \left(\dfrac{n}{s} \right)^{-1} \left(\sum_{k=1}^{\frac{n}{s} } (k - 1)  x^{q^{k\frac{sr}{d}}} - \sum_{k=0}^{\frac{n}{s} - 1} k  x^{q^{k\frac{sr}{d}}}  \right)\\
 &=
 \left(\dfrac{n}{s} \right)^{-1} \left( \left(\dfrac{n}{s} - 1\right)x - \sum_{k=1}^{\frac{n}{s} - 1 }  x^{q^{k\frac{sr}{d}}}   \right)\\
 &=
 x - \left(\dfrac{n}{s} \right)^{-1} T_{q^{nr/d}|q^{sr/d}}(x)\\
 &=
 x - \left(\dfrac{n}{s} \right)^{-1} T_{q^{n}|q^{s}}(x)\\
 &=
 x
 \end{split}
 \end{equation*}
 as required.
\end{proof}

By the means of some substitutions we obtain the following equivalent result.

\begin{thm}\label{thm: InvKer2}
 Let $q = p^m$ be a power of a prime number $p$, let $n,r,$ be positive integers such that $d:= (nm,r) = (m,r)$, and let $c \in \F$ such that $N_{q|p^d}(c) = 1$. 
 Then $L_{c,r}(x) := x^{p^r} - cx$ induces a permutation
 of $\ker(T_{q^n|q})$ if and only if $p \nmid n$. In this case the compositional inverse of $L_{c,r}$ over $\ker(T_{q^n|q})$ is given by
 $$
 L_{c,r}|_{\ker(T_{q^n|q})}^{-1}(x) = \sum_{j=0}^{\frac{m}{d}-1} c^{-\frac{p^{(j+1)r} - 1}{p^r - 1}} \left(  n^{-1} \sum_{k=1}^{n - 1} k  x^{p^{\frac{kmr}{d}}}\right)^{p^{jr}}.
 $$
\end{thm}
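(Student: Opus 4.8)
The plan is to deduce Theorem~\ref{thm: InvKer2} directly from Theorem~\ref{thm: InvKer} by a relabeling of parameters, since the sentence preceding the statement already advertises it as an \emph{equivalent} result obtained ``by the means of some substitutions.'' Concretely, I would invoke Theorem~\ref{thm: InvKer} with the triple $(q,n,s)$ replaced by $(p,\,nm,\,m)$, keeping $r$ and $c$ fixed; this is legitimate because Theorem~\ref{thm: InvKer} holds for an arbitrary prime power base field, in particular for the prime field $\mathbb{F}_p$. The first task is then to check that every hypothesis of Theorem~\ref{thm: InvKer} is met under this instantiation. The divisibility requirement ``$s \mid n$'' becomes $m \mid nm$, which holds trivially. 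The gcd requirement ``$(n,r)=(s,r)=:d$'' becomes $(nm,r)=(m,r)=:d$, which is exactly the standing assumption of Theorem~\ref{thm: InvKer2}; in particular $d \mid m$, so the norms below are well defined. Finally the membership and norm conditions ``$c \in \mathbb{F}_{q^s}$ with $N_{q^s|q^d}(c)=1$'' become ``$c \in \mathbb{F}_{p^m}$ with $N_{p^m|p^d}(c)=1$,'' and here I would record the identifications $\mathbb{F}_{p^m}=\mathbb{F}_q=\F$ and $N_{p^m|p^d}=N_{q|p^d}$ (with $q=p^m$), so that these read precisely as $c \in \F$ with $N_{q|p^d}(c)=1$, matching the present hypotheses.

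Next I would match up the objects in the conclusion. Since $p^{nm}=(p^m)^n=q^n$ and $p^m=q$, the ambient field $\mathbb{F}_{p^{nm}}$ is $\Fn$ and the intermediate field $\mathbb{F}_{p^m}$ is $\F$, whence the relative trace $T_{p^{nm}|p^m}$ coincides with $T_{q^n|q}$ and the kernel on which we permute is $\ker(T_{q^n|q})$, as required. The binomial $x^{p^r}-cx$ is literally unchanged by the substitution. For the permutation criterion, the condition ``$p \nmid n/s$'' of Theorem~\ref{thm: InvKer} becomes $p \nmid nm/m = n$, i.e.\ $p \nmid n$, which is the asserted characterization. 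It then remains to verify that the inverse formula transforms term by term: the outer range $s/d-1$ becomes $m/d-1$; the coefficient $c^{-(q^{(j+1)r}-1)/(q^r-1)}$ becomes $c^{-(p^{(j+1)r}-1)/(p^r-1)}$; the factor $(n/s)^{-1}$ becomes $(nm/m)^{-1}=n^{-1}$ and the inner sum $\sum_{k=1}^{n/s-1}$ becomes $\sum_{k=1}^{n-1}$; the inner exponent $q^{ksr/d}$ becomes $p^{kmr/d}$; and the outer Frobenius exponent $q^{jr}$ becomes $p^{jr}$. Assembling these reproduces exactly
$$
\sum_{j=0}^{\frac{m}{d}-1} c^{-\frac{p^{(j+1)r}-1}{p^r-1}} \left( n^{-1} \sum_{k=1}^{n-1} k\, x^{p^{\frac{kmr}{d}}} \right)^{p^{jr}},
$$
completing the deduction.

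There is no genuine conceptual obstacle here; the entire content is careful bookkeeping. The one point that warrants attention is the behaviour of the norm under the substitution: $N_{q^s|q^d}$ becomes $N_{p^m|p^d}$, and it would be a mistake to read its inner field as $\F$ rather than as the subfield $\mathbb{F}_{p^d}\subseteq\F$. Recognizing that the norm descends all the way to $\mathbb{F}_{p^d}$—consistent with $d=(nm,r)=(m,r)$ dividing $m$—is what makes the hypothesis $N_{q|p^d}(c)=1$ come out correctly. A secondary, purely cosmetic worry is that the symbol $q$ is the \emph{base} field in the present statement but is \emph{replaced} by $p$ when applying Theorem~\ref{thm: InvKer}; writing the substitution explicitly as $(q,n,s)\mapsto(p,nm,m)$ keeps the two roles of $p$ and $q$ from being conflated and guarantees each occurrence is relabeled consistently.
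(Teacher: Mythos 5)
Your proposal is correct and is exactly the paper's own argument: the published proof consists of the single sentence that the result follows from Theorem~\ref{thm: InvKer} by substituting $q,s,n$ there with $p,m,nm$, respectively, which is precisely your instantiation $(q,n,s)\mapsto(p,nm,m)$. Your additional bookkeeping (checking the hypotheses and matching the formula term by term) is a faithful, more detailed write-up of the same deduction.
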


\begin{proof}
 The result follows from Theorem \ref{thm: InvKer} if we substitute $q,s,n,$ there with $p,m,nm,$ respectively.
\end{proof}

\begin{rmk}
 Theorem \ref{thm: InvKer} and \ref{thm: InvKer2} are equivalent. Indeed, given $x^{q^r} - cx \in \Fn[x]$ satisfying Theorem \ref{thm: InvKer}, i.e., $c \in \Fs$ with $s \mid n$, $(s,r)=(n,r)$ and $q = p^m$,
 we get $x^{q^r} - cx = x^{p^{mr}} - cx \in \mathbb{F}_{q_1^{n/s}}[x]$ where $c \in \mathbb{F}_{q_1}$ with $q_1 := q^s = p^{ms}$, $T_{q^n|q^s} = T_{q_1^{n/s}|q_1}$, 
 and $(ms \cdot n/s,mr) = (mn,mr) = (ms, mr)$, satisfying Theorem \ref{thm: InvKer2}.
 Now the fact that Theorem \ref{thm: InvKer2} follows from Theorem \ref{thm: InvKer} implies the equivalence of the two.
\end{rmk}

\begin{cor}[{\bf Lemma 3.4, \cite{wu and lin}}]\label{cor: wu and lin}
 Let $q = p^m$ be a power of a prime number $p$, let $n,r,$ be positive integers such that $(n,r)=1$, and let $c \in \F$. Then $x^{p^r} - cx$ permutes $\ker(T_{q^n|q})$
 if and only if $c$ belongs to the any of the following two cases.
 
 (i) $N_{q|p^{(m,r)}}(c) = 1$ and $p \nmid n$; 
 
 (ii) $N_{q^n|p^{(m,r)}}(c) \neq 1$.
\end{cor}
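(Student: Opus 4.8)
The plan is to read off both directions of the equivalence from the results already established, after recording two elementary reductions. First, since $(n,r)=1$ forces any common divisor of $nm$ and $r$ to be coprime to $n$ and hence to divide $m$, one has $d:=(nm,r)=(m,r)$; this is exactly the quantity $d$ appearing in Corollary \ref{cor: invLBP} and in Theorem \ref{thm: InvKer2}, so both results apply verbatim here. Second, because $c\in\F$ we have $N_{q^n|q}(c)=c^{(q^n-1)/(q-1)}=c^n$, and transitivity of the norm through the tower $\mathbb{F}_{p^d}\subseteq\F\subseteq\Fn$ gives $N_{q^n|p^d}(c)=N_{q|p^d}(c)^n$. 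Thus, writing $A:=N_{q|p^d}(c)\in\mathbb{F}_{p^d}$, condition (ii) reads $A^n\neq 1$, while (i) asks $A=1$ together with $p\nmid n$; the whole statement is then governed by the single element $A$.

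For sufficiency I would simply invoke the earlier theorems. If (ii) holds then $N_{q^n|p^d}(c)\neq 1$, so Corollary \ref{cor: invLBP} yields that $L_{c,r}$ permutes $\ker(T_{q^n|q})$. If (i) holds then $N_{q|p^d}(c)=1$ and $p\nmid n$, so Theorem \ref{thm: InvKer2} (whose hypothesis $d=(nm,r)=(m,r)$ is met) again gives that $L_{c,r}$ permutes $\ker(T_{q^n|q})$.

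The substance of the argument is the necessity, and the hard part is the regime where (ii) just fails. Suppose $L_{c,r}$ permutes $\ker(T_{q^n|q})$. If $N_{q^n|p^d}(c)\neq 1$ we are in case (ii) and are done; otherwise $A^n=N_{q^n|p^d}(c)=1$, so $c\neq 0$ and, by Theorem \ref{invLBP} applied over $\mathbb{F}_{p^{nm}}=\Fn$, $L_{c,r}$ does not permute $\Fn$, whence $\ker(L_{c,r})\neq\{0\}$. I first claim $A=1$. Picking a nonzero $x_0\in\ker(L_{c,r})$, so $x_0^{p^r}=cx_0$, and applying the trace while using that $w\mapsto w^{p^r}$ is additive in characteristic $p$ (so $T_{q^n|q}(x_0^{p^r})=T_{q^n|q}(x_0)^{p^r}$), I get that $t:=T_{q^n|q}(x_0)\in\F$ satisfies $t^{p^r}=ct$. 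If $A\neq 1$ then no nonzero $t\in\F$ can satisfy this: a nonzero $t$ would give $c=t^{p^r-1}$ and hence $A=N_{q|p^d}(t)^{p^r-1}=1$, since $N_{q|p^d}(t)\in\mathbb{F}_{p^d}^*$ and $p^r\equiv 1\pmod{p^d-1}$ (as $d\mid r$), a contradiction. Thus $t=T_{q^n|q}(x_0)=0$, so $x_0$ is a nonzero element of $\ker(L_{c,r})\cap\ker(T_{q^n|q})$; but then $L_{c,r}$ fails to be injective on $\ker(T_{q^n|q})$ (recall $L_{c,r}(\ker(T_{q^n|q}))\subseteq\ker(T_{q^n|q})$), contradicting that it permutes the kernel. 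Hence $A=N_{q|p^d}(c)=1$. Finally, with $N_{q|p^d}(c)=1$ in hand, Theorem \ref{thm: InvKer2} says $L_{c,r}$ permutes $\ker(T_{q^n|q})$ if and only if $p\nmid n$; since it does permute, $p\nmid n$, and (i) holds. The only delicate point is this trace computation, which rules out the intermediate case $A\neq 1$, $A^n=1$ (satisfying neither (i) nor (ii)); everything else is bookkeeping with Lemma \ref{lem: norm identity} and the two cited theorems.
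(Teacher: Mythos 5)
Your proof is correct, and it is in fact more complete than the paper's own argument. The paper disposes of this corollary in one line: after noting that $(n,r)=1$ gives $d:=(nm,r)=(m,r)$, it says that case (i) follows from Theorem \ref{thm: InvKer2} and case (ii) from Corollary \ref{cor: invLBP}. That covers sufficiency of both cases and, via the ``if and only if'' in Theorem \ref{thm: InvKer2}, necessity when $N_{q|p^d}(c)=1$; but it never addresses the regime you correctly single out as the hard part, namely $A:=N_{q|p^d}(c)\neq 1$ with $A^n=1$, where neither cited result applies (Corollary \ref{cor: invLBP} is one-directional, and Theorem \ref{thm: InvKer2} has $N_{q|p^d}(c)=1$ as a hypothesis). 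Your trace argument closes exactly this gap, and it is sound: from $N_{q^n|p^d}(c)=A^n=1$ and Theorem \ref{invLBP} (applied over $\mathbb{F}_{p^{nm}}=\Fn$) you get a nonzero $x_0\in\ker(L_{c,r})$; the identity $T_{q^n|q}(x_0)^{p^r}=c\,T_{q^n|q}(x_0)$ together with $A\neq 1$ forces $T_{q^n|q}(x_0)=0$, since any nonzero $t\in\F$ with $t^{p^r-1}=c$ would give $A=N_{q|p^d}(t)^{p^r-1}=1$ (using $d\mid r$, so $p^d-1$ divides $p^r-1$); and a nonzero element of $\ker(L_{c,r})\cap\ker(T_{q^n|q})$ destroys injectivity on the kernel, contradicting the assumed permutation. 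This genuinely strengthens the exposition: the paper's citation-only proof either leaves this intermediate case to the reader or implicitly defers to Wu--Lin's original Lemma 3.4, whereas your argument makes the equivalence self-contained within the paper's own results. The remainder of your proof (the reduction $(nm,r)=(m,r)$, the norm transitivity $N_{q^n|p^d}(c)=N_{q|p^d}(c)^n$, and the two citations for sufficiency) matches the paper's route exactly.
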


\begin{proof}
 Using the fact that $d := (nm,r) = (m,r)$ because $(n,r) = 1$ by assumption, the result (i) follows from Theorem \ref{thm: InvKer2}, while (ii) follows from Corollary \ref{cor: invLBP}. 
\end{proof}

\begin{rmk}
By choosing, say $n = 8$ and $m = r = 2$, it is easy to see that the hypotheses of Theorem \ref{thm: InvKer2} are in fact more general than those of Corollary \ref{cor: wu and lin}.
\end{rmk}

The following corollary shows that under some restrictions of $p$ and $n$, $L_{c,r}$ permutes $\ker(T_{q^n|q})$ for each $c \in \F$. Later on in Section 3 
we will make use of this result in order to construct a class of complete mappings.

\begin{cor}\label{cor: InvKer2}
 Let $q = p^m$ be a power of a prime number $p$, let $n,r,$ be positive integers such that $d:= (nm,r) = (m,r)$, $p \nmid n$, and $(n, p^d - 1) = 1$. Then $L_{c,r}(x) := x^{p^r} - cx$
 induces a permutation of $\ker(T_{q^n|q})$ for each $c \in \F$.
\end{cor}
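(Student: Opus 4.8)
The plan is to show that, under the stated hypotheses, every $c \in \F$ falls into at least one of the two cases already settled: the case $N_{q|p^d}(c) = 1$, covered by Theorem~\ref{thm: InvKer2}, and the case $N_{q^n|p^d}(c) \neq 1$, covered by Corollary~\ref{cor: invLBP}. Since the assumptions $d := (nm,r) = (m,r)$ and $p \nmid n$ are precisely those demanded by Theorem~\ref{thm: InvKer2}, while Corollary~\ref{cor: invLBP} only requires $c \in \F$ together with $N_{q^n|p^d}(c) \neq 1$, it suffices to prove the dichotomy: for each $c \in \F$, either $N_{q|p^d}(c) = 1$ or $N_{q^n|p^d}(c) \neq 1$.

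First I would record the norm identity $N_{q^n|p^d}(c) = N_{q|p^d}(c)^n$, valid for all $c \in \F$. This follows from the transitivity of the norm along the tower $\mathbb{F}_{p^d} \subseteq \F \subseteq \Fn$, namely $N_{q^n|p^d} = N_{q|p^d} \circ N_{q^n|q}$, together with the observation that $N_{q^n|q}(c) = c^{(q^n-1)/(q-1)} = c^n$ for $c \in \F$ (as $c^q = c$), and the multiplicativity of $N_{q|p^d}$. With this identity I would then argue the dichotomy. Fix $c \in \F$ and suppose $N_{q|p^d}(c) \neq 1$; I must show $N_{q^n|p^d}(c) \neq 1$. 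If $c = 0$ this is immediate, since $N_{q^n|p^d}(0) = 0$. Otherwise set $a := N_{q|p^d}(c) \in \mathbb{F}_{p^d}^{*}$, so the multiplicative order of $a$ divides $p^d - 1$. Were $N_{q^n|p^d}(c) = a^n = 1$, the order of $a$ would also divide $n$, hence divide $(n, p^d-1) = 1$, forcing $a = 1$, contrary to assumption. Thus $N_{q^n|p^d}(c) = a^n \neq 1$, and Corollary~\ref{cor: invLBP} gives that $L_{c,r}$ permutes $\ker(T_{q^n|q})$. In the complementary case $N_{q|p^d}(c) = 1$, Theorem~\ref{thm: InvKer2} applies directly, its hypotheses being part of the present assumptions, again yielding that $L_{c,r}$ permutes $\ker(T_{q^n|q})$. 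As these two cases exhaust all $c \in \F$, the claim follows.

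The computation is essentially routine; the only points requiring care are the verification of the norm transitivity identity and the bookkeeping confirming that the hypotheses of the two cited results — $d = (nm,r)$ for Corollary~\ref{cor: invLBP}, and $d = (nm,r) = (m,r)$ with $p \nmid n$ for Theorem~\ref{thm: InvKer2} — are met under the present assumptions. The coprimality condition $(n, p^d - 1) = 1$ enters precisely to ensure that a nontrivial norm value in $\mathbb{F}_{p^d}^{*}$ cannot be sent to $1$ upon raising to the $n$-th power; this is the crux that forces the two cases to cover every $c$, and I expect it to be the only genuinely load-bearing hypothesis in the argument.
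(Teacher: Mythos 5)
Your proposal is correct and follows essentially the same route as the paper: the same dichotomy between $N_{q|p^d}(c)=1$ (handled by Theorem~\ref{thm: InvKer2}) and $N_{q|p^d}(c)\neq 1$, the same norm-transitivity identity $N_{q^n|p^d}(c)=N_{q|p^d}(c)^n$, and the same multiplicative-order argument exploiting $(n,p^d-1)=1$. The only cosmetic difference is at $c=0$: the paper simply observes directly that $x^{p^r}$ permutes the kernel, whereas you route $c=0$ through Corollary~\ref{cor: invLBP} via $N_{q^n|p^d}(0)=0\neq 1$, which is fine for the permutation conclusion (though note the inverse formula in that corollary degenerates at $c=0$, so only the permutation claim should be cited there).
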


\begin{proof}
 If $N_{q|p^d}(c) = 1$, then $L_{c,r}$ permutes $\ker(T_{q^n|q})$ by Theorem \ref{thm: InvKer2} (because $p \nmid n$ additionally, by assumption). 
 Now assume $N_{q|p^d}(c) \neq 1$. We claim that $N_{q|p^d}(c)^{n} \neq 1$; equivalently, since $N_{q^n|p^d}(c) = N_{q|p^d} \circ N_{q^n|q}(c) = N_{q|p^d}(c)^n \neq 1$, 
 $L_{c,r}$ permutes $\Fn$ and hence $\ker(T_{q^n|q})$ by Corollary \ref{cor: invLBP}. 
 Clearly, if $c = 0$, then $L_{c,r}$ permutes $\ker(T_{q^n|q})$. If $c \neq 0$, denote by $t$ the multiplicative order of $N_{q|p^d}(c) \in \mathbb{F}_{p^d}^*$.
It is clear that $t \mid (p^d - 1)$. On the contrary suppose that $N_{q|p^d}(c)^{n} = 1$. Then $t \mid n$ as well. As a result, $t \mid (n, p^d - 1) = 1$ giving 
$t = 1$, a contradiction to our assumption $N_{q|p^d}(c) \neq 1$. It follows that $L_{c,r}$ permutes $\ker(T_{q^n|q})$ for all $c \in \F$.
\end{proof}

\subsection{Method used to obtain the inverse in Theorem \ref{thm: InvKer}}

We know from Theorem \ref{invLBP} that 
$L_{c,r}(x) = x^{q^r} - cx$ is a permutation polynomial over $\Fn$ if and only if $N_{q^n|q^d}(c) \neq 1$, where $d = (n,r)$. 
In this case $L_{c,r}$ must also permute $\ker(T_{q^n|q^s})$ if $c \in \F$ and thus we may take $L_{c,r}^{-1}$ to be the compositional 
inverse over $\ker(T_{q^n|q^s})$, as done in Corollary \ref{cor: invLBP}. 
In this subsection we consider the case when $L_{c,r}$ permutes $\ker(T_{q^n|q^s})$ but does not permute $\Fn$ (Theorem \ref{thm: InvKer}), and attempt to obtain a compositional inverse over $\ker(T_{q^n|q^s})$.
Our method bears similarity to that employed in \cite{wu} where the compositional inverse of linearized binomials with full rank was obtained.
It consists of modifying the initial problem into an easier one via substitutions of the parameters $q,n,$ and in our case, $s$ as well, and then computing the inverse over
a convenient super space of $\ker(T_{q^n|q^s})$.

If we let $q_1 := q^{d}$, where $d := (n,r) = (s,r)$, then $L_{c,r}(x)$ becomes $x^{q_1^{r/d}} - cx \in \Fn[x] = \mathbb{F}_{q_1^{n/d}}[x]$ 
with $c \in \Fs = \mathbb{F}_{q_1^{s/d}}$, $T_{q^n|q^s} = T_{q_1^{n/d}|q_1^{s/d}}$, $N_{q^n|q^d}(c) = N_{q_1^{n/d}|q_1^{s/d}}(c) = 1$, and $(n/d,r/d) = (s/d,r/d) = 1$.
Thus we first consider
the case when $(n,r) = (s,r) = 1$. Now view $L_{c,r}$ as a polynomial over the composite field $\mathbb{F}_{q^{nr}}$ and observe that $L_{c,r} \in \mathscr{L}_n(\mathbb{F}_{q^{nr}})$, 
i.e., $L_{c,r}$ is a $q^r$-polynomial over $\mathbb{F}_{q^{nr}}$. 
From Lemma \ref{lem: norm identity} we know that $T_{q^{nr}|q^{sr}}|_{\Fn} = T_{q^n|q^s}$ and thus $\ker(T_{q^n|q^s}) = \ker(T_{q^{nr}|q^{sr}}|_{\Fn})
\subseteq \ker(T_{q^{nr}|q^{sr}})$. Moreover $L_{c,r}(\ker(T_{q^{nr}|q^{sr}})) \subseteq \ker(T_{q^{nr}|q^{sr}})$. 
It follows that if $L_{c,r}$ permutes $\ker(T_{q^{nr}|q^{sr}})$, then the inverse map of $L_{c,r}|_{\ker(T_{q^{n}|q^{s}})}$ can be obtained from the inverse of 
$L_{c,r}|_{\ker(T_{q^{nr}|q^{sr}})}$ by restricting its domain to $\Fn$. 
We make use of the following theorem in order to obtain the inverse of $L_{c,r}$ over $\ker(T_{q^{nr}|q^{sr}})$.
Let $v_{q,n} : \mathscr{L}_n(\Fn) \rightarrow \Fn^n$ denote the natural map defined by $v_{q,n}(\sum_{i=0}^{n-1} a_i x^{q^i}) = (a_0 \ a_1 \cdots  a_{n-1})$ (for simplicity we write vectors horizontally).
Recall that a linear operator $L$ on $\F$ is called {\em idempotent} if $L \circ L (x) = L(x)$ for all $x \in \F$. 

\begin{thm}[{\bf Theorem 2.5, \cite{tuxanidy}}]\label{thm: LinearizedInverse}
   Let $V, \bar{V}$, be two equally sized $\F$-subspaces of $\Fn$, let $\varphi \in \Ln$ induce a bijection from $V$ to $\bar{V}$, and let $D_\varphi$
   be the associate Dickson matrix of $\varphi$. Then $L \in \Ln$ induces the inverse map of $\varphi|_V$ if and only if $v_{q,n}(L(x))D_\varphi = v_{q,n}(x - K(x))$
   for some $K \in \Ln$ inducing an idempotent endomorphism of $\Fn$ with $\ker(K) = V$.
  \end{thm}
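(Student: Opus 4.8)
The plan is to transport the matrix identity to an identity of $q$-linearized operators and then read both implications off the geometry of the idempotent $\id-K$. The engine is the standard fact that the coordinate map $v_{q,n}\colon\Ln\to\Fn^{\,n}$ is an additive bijection carrying composition to right multiplication by the Dickson matrix: for all $f,g\in\Ln$,
\[
v_{q,n}(f\circ g)=v_{q,n}(f)\,D_g.
\]
Indeed $f\mapsto D_f$ is a ring isomorphism of $\Ln$ onto the algebra of Dickson matrices, and $v_{q,n}(f)D_g$ is simply the top row of $D_fD_g=D_{f\circ g}$; this is immediate from the definition, since the coefficient of $x^{q^k}$ in $f\circ g$ equals $\sum_i a_ib_{k-i}^{q^i}$ with indices read modulo $n$, which is the $k$-th entry of $v_{q,n}(f)D_g$. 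My first step is to record this identity together with the injectivity of $v_{q,n}$ as a short preliminary lemma.

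Granting it, the right-hand condition simplifies at once. Since $v_{q,n}(L)D_\varphi=v_{q,n}(L\circ\varphi)$, injectivity of $v_{q,n}$ shows that, modulo $x^{q^n}-x$, the equation $v_{q,n}(L)D_\varphi=v_{q,n}(x-K(x))$ is \emph{equivalent} to the operator identity $L\circ\varphi=\id-K$. Next observe that ``$K\in\Ln$ induces an idempotent with $\ker K=V$'' says exactly that $P:=\id-K$ is idempotent with $\im P=\ker K=V$, i.e.\ that $P$ is a projection of $\Fn$ onto $V$, so in particular $P|_V=\id_V$. Hence the whole statement collapses to the transparent equivalence
\[
L\ \text{induces the inverse of}\ \varphi|_V\quad\Longleftrightarrow\quad L\circ\varphi\ \text{is a projection of}\ \Fn\ \text{onto}\ V.
\]

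The direction ($\Leftarrow$) is then immediate: if $L\circ\varphi=P$ is a projection onto $V$, then $(L\circ\varphi)(v)=P(v)=v$ for every $v\in V$, so $L|_{\bar{V}}$ undoes the bijection $\varphi|_V\colon V\to\bar{V}$, which is precisely the assertion that $L$ induces the inverse map of $\varphi|_V$. For the converse I would set $M:=L\circ\varphi$ and $K:=\id-M$; the hypothesis gives $M|_V=\id_V$, hence $V\subseteq\ker K$, and the entire converse rests on the single inclusion $\im M\subseteq V$. Granting that inclusion, every value $M(x)\in V$ is fixed by $M$, so $M^2=M$; moreover any $x$ with $M(x)=x$ lies in $\im M\subseteq V$, so $\ker(\id-M)=V$. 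Thus $K$ is idempotent with $\ker K=V$ and $L\circ\varphi=\id-K$, as required.

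The genuine obstacle is establishing $\im(L\circ\varphi)\subseteq V$, and it is here that the precise meaning of ``$L$ induces the inverse map of $\varphi|_V$'' must be used rather than the weaker ``$L$ agrees with $(\varphi|_V)^{-1}$ on $\bar{V}$''. Inducing the inverse map means $L$ realizes the honest inverse bijection $\bar{V}\to V$, so that the composite $L\circ\varphi$ returns all of $\Fn$ into $V$; equivalently, $\im(L\circ\varphi)=V$. By contrast, a linearized $L$ that merely matches $(\varphi|_V)^{-1}$ on $\bar{V}$ but disperses the coordinates transverse to $\bar{V}$ need not make $L\circ\varphi$ idempotent, and this residual freedom is exactly what the constraint $\ker K=V$ removes. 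I therefore expect the crux of the write-up to be fixing this interpretation cleanly (so that $\im(L\circ\varphi)\subseteq V$ is available) and checking the bookkeeping in the identity $v_{q,n}(f\circ g)=v_{q,n}(f)D_g$; once those are in place, the theorem is a mechanical translation through the isomorphism $f\mapsto D_f$.
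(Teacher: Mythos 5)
Your reduction through the identity $v_{q,n}(f\circ g)=v_{q,n}(f)D_g$ is correct, and your proof of the ``if'' direction is sound: the matrix equation is equivalent to the operator identity $L\circ\varphi=\id-K$, a projection onto $V$ fixes $V$ pointwise, and hence $L$ inverts $\varphi|_V$ on $\bar V$. For context, the paper itself offers no proof of this theorem: it is quoted from \cite{tuxanidy}, and the remark following it asserts only that the ``if'' direction (the part missing from the original statement) is trivial to verify. That is precisely the part you have verified, by essentially the intended argument.

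The ``only if'' direction of your write-up, however, has a genuine gap, and it cannot be closed as written. The paper's convention (end of Section 1) is that $L$ ``induces the inverse map of $\varphi|_V$'' exactly when the restriction of $L$ to $\bar V$ equals $(\varphi|_V)^{-1}$; it imposes nothing on $L$ outside $\bar V$. From this you only get $(L\circ\varphi)|_V=\id_V$. Your claim that the hypothesis makes ``the composite $L\circ\varphi$ return all of $\Fn$ into $V$'' is a non sequitur: for $x\notin V$ the point $\varphi(x)$ need not lie in $\bar V$, so the hypothesis says nothing about $L(\varphi(x))$. Declaring that ``inducing the inverse map'' means $\im(L\circ\varphi)\subseteq V$ is not an interpretation but a strictly stronger hypothesis, essentially equivalent to the conclusion you are trying to reach. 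Indeed, under the paper's definition the ``only if'' direction is false: take $n\geq 2$, $V=\bar V=\F$ viewed inside $\Fn$, $\varphi(x)=x$ and $L(x)=x$. Then $L$ induces the inverse map of $\varphi|_V$, but $v_{q,n}(L)D_\varphi=v_{q,n}(x)$ forces $K=0$, whose kernel is $\Fn\neq V$, so no admissible $K$ exists. What is true, and what your argument actually establishes, is an existential form of necessity: since $\ker\varphi\cap V=\{0\}$, one may extend $\ker\varphi$ to a complement $W$ of $V$, let $\id-K$ be the projection onto $V$ along $W$, and solve $L\circ\varphi=\id-K$ for $L$ (possible because $\ker\varphi\subseteq W$); this produces \emph{some} $L$ inducing the inverse map together with an admissible $K$ satisfying the matrix equation. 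That weaker form, combined with the sufficiency direction you proved, is all that the applications in Section 2.2 require, since there a solution of the linear system is exhibited explicitly.
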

  
  \begin{rmk}
   The original statement of Theorem 2.5 of \cite{tuxanidy} only gave necessary conditions for $L$ to induce the inverse map of $\varphi|_V$. However it is trivial to show that these
   are also sufficient.
  \end{rmk}

 Since $\ker(T_{q^{nr}|q^{sr}})$ is an $\mathbb{F}_{q^r}$-subspace of $\mathbb{F}_{q^{nr}}$, and $L_{c,r} \in \mathscr{L}_n(\mathbb{F}_{q^{nr}})$ is a $q^r$-polynomial over $\mathbb{F}_{q^{nr}}$ inducing (by assumption)
 a permutation of $\ker(T_{q^{nr}|q^{sr}})$, Theorem \ref{thm: LinearizedInverse} applies.
  Noting that $(n/s)^{-1}T_{q^{nr}|q^{sr}} \in \mathscr{L}_n(\mathbb{F}_{q^{nr}})$ is idempotent on
$\mathbb{F}_{q^{nr}}$ having kernel $\ker(T_{q^{nr}|q^{sr}})$, we know from Theorem \ref{thm: LinearizedInverse} that $L_{c,r}$ permutes $\ker(T_{q^{nr}|q^{sr}})$ (hence $\ker(T_{q^n|q^s})$) if and only if there exists a solution 
$\bar{d} = (d_0,  \ldots, d_{n-1})$
to the linear equation 
\begin{align*}
\bar{d}D_{L_{r}} 
&= 
v_{q^r,n}\left(x - \left(\dfrac{n}{s}\right)^{-1}T_{q^{nr}|q^{sr}}(x)\right)\\
&= -\left( \dfrac{n}{s} \right)^{-1}\left(1 - \dfrac{n}{s}, 0,0, \ldots, 0, 1,0,0, \ldots, 0,1,0,0, \ldots    \right),
\end{align*} 
where the non-zero entries on the right hand side occur at indices (which start at $0$ and end at $n-1$) given by $ks$ with $0 \leq k < n/s$, and
$$
D_{L_{c,r}} = \left( \begin{array}{cccccc}
                 -c & 1 & 0 & 0 &\cdots & 0\\
                 0 & -c^{q^r} & 1 & 0 & \cdots & 0\\
                 \vdots & \vdots &\vdots &\vdots & & \vdots\\
                1 & 0 & 0 & 0 & \cdots & -c^{q^{(n-1)r}}
                \end{array} \right)
$$ 
is the associate Dickson matrix of $L_{c,r} \in \mathscr{L}_n(\mathbb{F}_{q^{nr}})$. If so, it follows that the polynomial 
$\sum_{i=0}^{n-1}d_ix^{q^{ir}}$ induces the inverse map of $L_{c,r}|_{\ker(T_{q^{nr}|q^{sr}})}$, and hence of $L_{c,r}|_{\ker(T_{q^n|q^s})}$. Solving the linear equation we obtain a solution
$$
d_{ks+j} = \left(\dfrac{n}{s} \right)^{-1} k c^{- \frac{q^{(j+1)r} - 1}{q^r-1}},
$$
where $0 \leq j < s$ and $0 \leq k < n/s$. To see this, it suffices to show that $\bar{d}$ satisfies the linear equation, i.e., that
the $(ks+j)$-th entry, $(\bar{d}D_{L_{c,r}})_{ks+j}$, of the $n$-tuple $\bar{d}D_{L_{c,r}}$, satisfies
$$
\left( \bar{d}D_{L_{c,r}}\right)_{ks+j} = 
\begin{cases}
 1 - \left( \dfrac{n}{s} \right)^{-1} &\mbox{ if } j = k = 0;\\
 - \left( \dfrac{n}{s} \right)^{-1} &\mbox{ if } j = 0 \mbox{ and } k \geq 1;\\
 0 &\mbox{ otherwise},
\end{cases}
$$
where $0 \leq j < s$ and $0 \leq k < n/s$. First recall that we have made the assumptions that $N_{q^s|q}(c^{-1}) = 1$ ($d = 1$ since 
$d := (n,r) = (s,r) = 1$ by our assumption above) and $p \nmid n/s$ of Theorem \ref{thm: InvKer}. Then Lemma \ref{lem: norm identity} gives $N_{q^{sr}|q^r}(c^{-1}) = 1$.
We have
\begin{align*}
\left( \bar{d}D_{L_{c,r}} \right)_0 
&= 
-d_0c + d_{n-1} = 0 + d_{\left( \frac{n}{s} - 1  \right)s + s-1}\\
&= 
\left(\dfrac{n}{s}   \right)^{-1}\left( \dfrac{n}{s} - 1  \right) N_{q^{sr}|q^r}\left(c^{-1}  \right)\\
&= 1 - \left( \dfrac{n}{s}  \right)^{-1};\\
\left( \bar{d}D_{L_{c,r}} \right)_{\substack{ks \\ 0 < k < n/s}}
&= 
-d_{ks}c^{q^{ksr}} + d_{ks - 1} = -d_{ks}c + d_{(k-1)s + s-1}\\
&=
\left(\dfrac{n}{s}\right)^{-1}\Bigr[ - k c^{-1} c + \left( k - 1  \right)N_{q^{sr}|q^r}\left( c^{-1}  \right) \Bigr]\\
&= -\left(\dfrac{n}{s} \right)^{-1}; \text{ and}\\
\left( \bar{d}D_{L_{c,r}} \right)_{\substack{ks + j \\ 0 < j < s}}
&= 
-d_{ks+j} c^{q^{(ks+j)r}} + d_{ks+j-1}\\
&= 
\left( \dfrac{n}{s} \right)^{-1}k
\Bigr[ -c^{-\frac{q^{(j+1)r} - 1}{q^r - 1}} c^{q^{jr}} + c^ {-\frac{q^{jr} - 1}{q^r - 1}} \Bigr]\\
&=
\left( \dfrac{n}{s} \right)^{-1}k
\Bigr[ -c^{-\frac{q^{jr} - 1}{q^r - 1}}  + c^ {-\frac{q^{jr} - 1}{q^r - 1}} \Bigr]\\
&= 0,
\end{align*}
as required. As a result, if $(n,r) = 1$, then $N_{q^s|q}(c) = 1$ and $p \nmid n/s$ are sufficient conditions for $L_{c,r}$ to permute $\ker(T_{q^n|q^s})$ but not $\Fn$.
In this case, one of the polynomials inducing the inverse map of $L_{c,r}|_{\ker(T_{q^n|q^s})}$ is given by
\begin{align*}
L_{c,r}|_{\ker(T_{q^n|q^s})}^{-1}(x) 
&= 
\left(\dfrac{n}{s} \right)^{-1} \sum_{j=0}^{s-1} \sum_{k=1}^{\frac{n}{s} - 1} k c^{- \frac{q^{(j+1)r} - 1}{q^r-1}} x^{q^{(ks+j)r}}\\
&=
\sum_{j=0}^{s-1} c^{- \frac{q^{(j+1)r} - 1}{q^r-1}} \left(  \left(\dfrac{n}{s} \right)^{-1} \sum_{k=1}^{\frac{n}{s} - 1} k  x^{q^{ksr}}\right)^{q^{jr}}.
\end{align*}
Now for the general case of $1 \leq r \leq n-1$ and $d = (n,r) = (s,r)$, substitute $q,n,r,s,$ with $q^{d}, n/d, r/d, s/d$, respectively, in the expression above 
for $L_{c,r}|_{\ker(T_{q^n|q^s})}^{-1}$, to obtain the result in Theorem \ref{thm: InvKer}.

\section{Improvement of a class of complete permutation polynomials}

In \cite{wu and lin} Wu and Lin gave the following class of complete permutation polynomials which generalized some of the classes previously studied in 
\cite{chapuy0, simona, wu: even cpp, zhang}.

\begin{thm}[{\bf Theorem 3.7, \cite{wu and lin}}]
 Let $q = p^m$ be a prime power. Let $G \in \F[x]$, let $r$ be a positive integer with $(r,n) = 1$, and assume $p \nmid n$
 and $(n, p^{(m,r)} - 1) = 1$. Then the polynomial
 $$
 x \left( G(T_{q^n|q}(x)) + a T_{q^n|q}(x)^{p^r-1} - ax^{p^{r}-1}    \right)
 $$
 is a complete permutation polynomial over $\Fn$ for each $a \in \F^*$ if and only if $xG(x)$ is a complete permutation
 polynomial over $\F$.
\end{thm}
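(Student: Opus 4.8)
The plan is to exploit the way this polynomial interacts with the trace $T := T_{q^n|q}$, reducing the permutation question to the earlier Corollary~\ref{cor: InvKer2}. Writing
\[
f_a(x) := x\bigl(G(T(x)) + aT(x)^{p^r-1} - ax^{p^r-1}\bigr) = xG(T(x)) + axT(x)^{p^r-1} - ax^{p^r},
\]
the first step I would carry out is the key identity. Since $T$ is $\F$-linear, $T(x)\in\F$, and $T$ commutes with $x\mapsto x^{p^r}$, the scalar factors $G(T(x))$ and $T(x)^{p^r-1}$ pull out of $T$, and $T(x^{p^r}) = T(x)^{p^r}$, so
\[
T(f_a(x)) = G(T(x))\,T(x) + aT(x)^{p^r} - aT(x)^{p^r} = T(x)\,G(T(x)).
\]
The two $a$-terms cancel, so $T\circ f_a = \bar f\circ T$ where $\bar f(y) := yG(y)$ is \emph{independent of} $a$; thus $f_a$ is compatible with $T$ (in the sense of the AGW criterion), the induced map on $\F$ being the polynomial $xG(x)$.

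Next I would analyse $f_a$ on each fibre $T^{-1}(b)$, $b\in\F$. There $T(x)=b$ is constant, so
\[
f_a(x) = (G(b) + ab^{p^r-1})x - ax^{p^r} = -a\,L_{c,r}(x), \qquad c := a^{-1}G(b) + b^{p^r-1} \in \F,
\]
where $L_{c,r}(x) = x^{p^r} - cx$. Because $L_{c,r}$ is additive and maps $\ker(T)$ into itself, and $T^{-1}(b)$ is a coset of $\ker(T)$, the map $f_a$ is injective on $T^{-1}(b)$ if and only if $L_{c,r}$ permutes $\ker(T)$. Here the hypotheses enter decisively: $(n,r)=1$ forces $(nm,r)=(m,r)$, so together with $p\nmid n$ and $(n,p^{(m,r)}-1)=1$ the assumptions of Corollary~\ref{cor: InvKer2} hold, whence $L_{c,r}$ permutes $\ker(T)$ for \emph{every} $c\in\F$. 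The fibre condition therefore holds automatically, for every $b\in\F$ and every $a\in\F^*$.

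With these two ingredients the permutation equivalence follows. If $xG(x)=\bar f$ permutes $\F$, then $f_a(x_1)=f_a(x_2)$ gives, after applying $T$, $\bar f(T(x_1))=\bar f(T(x_2))$, hence $T(x_1)=T(x_2)=:b$; the fibre injectivity just established then yields $x_1=x_2$, so $f_a$ is a PP. Conversely, if $f_a$ is a PP then $T\circ f_a=\bar f\circ T$ is surjective, so $\bar f$ is surjective and hence bijective, i.e.\ $xG(x)$ permutes $\F$. Thus $f_a$ is a PP over $\Fn$, for each $a\in\F^*$, if and only if $xG(x)$ is a PP over $\F$.

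Finally I would treat the complete-mapping condition by observing that $f_a+x$ has exactly the same shape with $G$ replaced by $G+1$,
\[
f_a(x)+x = x\bigl((G+1)(T(x)) + aT(x)^{p^r-1} - ax^{p^r-1}\bigr),
\]
so the equivalence just proved (applied to $G+1$) shows $f_a+x$ is a PP if and only if $x(G(x)+1)=xG(x)+x$ is a PP over $\F$. Combining both equivalences, $f_a$ is a CPP for each $a\in\F^*$ exactly when both $xG(x)$ and $xG(x)+x$ permute $\F$, i.e.\ exactly when $xG(x)$ is a complete mapping over $\F$. The main obstacle is the fibre step: injectivity on the level sets of $T$ rests entirely on $L_{c,r}$ permuting $\ker(T)$ for all $c\in\F$, which is precisely why the three arithmetic hypotheses on $p$, $n$, $r$ are imposed and where Corollary~\ref{cor: InvKer2} does the essential work.
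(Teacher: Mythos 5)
Your proof is correct and is essentially the paper's own argument (given there for the more general Theorem~\ref{thm: improv}, which contains this statement as the special case $(n,r)=1$): both reduce the problem, via the trace-compatibility $T\circ f_a=\bar f\circ T$, to the subfield condition that $xG(x)$ (resp.\ $xG(x)+x$) permute $\F$, together with the condition that $x^{p^r}-cx$ permute $\ker(T_{q^n|q})$ for every $c\in\F$, the latter being settled by Corollary~\ref{cor: InvKer2}. The only difference is presentational: you re-derive the fibre-wise reduction (and handle the complete-mapping part by the substitution $G\mapsto G+1$) from first principles, whereas the paper obtains the same reduction by citing Lemma~\ref{lem: coulter} (Coulter--Henderson--Mathews) applied to both $f$ and $f+x$.
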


In this section we improve upon this result by replacing the requirement that $(n,r)=1$ with the more general one of 
$(m,r) = (mn,r)$. See Theorem \ref{thm: improv} below yielding Corollary \ref{cor: improv}.
Finally in Corollary \ref{cor: recursive} we give a recursive construction of complete mappings
involving multi-trace functions. In Section 5 we will use this construction to generate
sets of mutually orthogonal Latin squares, and in Section 6 the class will be of use in deriving a class of vectorial bent functions.
First we need the following lemma due to Coulter-Henderson-Mathews, a consequence of the AGW Criterion given in Lemma 1.2 of \cite{akbary}.

\begin{lem}[{\bf Theorem 3, \cite{coulter}}]\label{lem: coulter}
Let $q = p^m$ be a prime power, let $g \in \mathbb{F}_q[x]$, let $H \in \F[x]$ be a $p$-polynomial, and let $f(x) = H(x) + xg(T(x))$.
Then $f(x)$ is a permutation polynomial over $\mathbb{F}_{q^n}$
if and only if the following two conditions hold.

(i) $\varphi_y(x) := H(x) + xg(y)$ induces a permutation of $\ker(T) = \{\beta^q - \beta  \mid \beta \in \Fn\}$ for each $y \in \F$.

(ii) $\bar{f}(x) := H(x) + xg(x)$ induces a permutation of $\mathbb{F}_q$.
 \end{lem}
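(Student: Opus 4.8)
The plan is to apply the AGW Criterion (the result cited as Lemma 1.2 of \cite{akbary}) with the trace map in the role of the projection. I would take the ambient set to be $\Fn$, both quotient sets to be $\F$, and both projections to be $T = T_{q^n|q}$, which is surjective onto $\F$. To invoke the criterion I first need a map $\bar f$ on $\F$ making the square commute, that is $T \circ f = \bar f \circ T$. The natural candidate is $\bar f(x) := H(x) + x\,g(x)$, which is exactly the polynomial appearing in condition (ii).

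Establishing this commutativity is the step that needs the most care, and I regard it as the main obstacle. Writing $H(x) = \sum_i h_i x^{p^i}$ with $h_i \in \F$, I would compute $T(f(x)) = T(H(x)) + T\big(x\,g(T(x))\big)$. Since $g(T(x)) \in \F$ and $T$ is $\F$-linear, the second summand equals $g(T(x))\,T(x)$. For the first summand the key identity is $T(H(x)) = H(T(x))$: because $h_i \in \F = \mathbb{F}_q$ one has $h_i^{q^j} = h_i$, and because raising to the $p^i$-th power is additive in characteristic $p$, it follows that $\sum_j (h_i x^{p^i})^{q^j} = h_i\big(\sum_j x^{q^j}\big)^{p^i} = h_i\,T(x)^{p^i}$; summing over $i$ gives $T(H(x)) = H(T(x))$. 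Hence $T(f(x)) = H(T(x)) + T(x)\,g(T(x)) = \bar f(T(x))$, so the square commutes. The hypothesis that $H$ is a $p$-polynomial \emph{with coefficients in} $\F$ is precisely what makes this identity work.

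With commutativity in hand, the AGW Criterion yields that $f$ permutes $\Fn$ if and only if (a) $\bar f$ permutes $\F$ and (b) $f$ is injective on every fibre $T^{-1}(y)$, $y \in \F$. Condition (a) is verbatim condition (ii). For (b), I would fix $y \in \F$ and note that on the fibre $T^{-1}(y)$ one has $T(x) = y$, so $f$ restricts there to the additive map $\varphi_y(x) = H(x) + x\,g(y)$. Since $T$ is additive, each fibre is a coset of $\ker(T)$, and because $\varphi_y$ is additive, $f$ is injective on $T^{-1}(y)$ if and only if $\varphi_y$ is injective on $\ker(T)$. A short check shows $\varphi_y(\ker(T)) \subseteq \ker(T)$: for $k \in \ker(T)$ one has $T(\varphi_y(k)) = H(T(k)) + g(y)\,T(k) = 0$. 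Thus $\varphi_y$ is an endomorphism of the finite set $\ker(T)$, so injectivity there is equivalent to being a permutation of $\ker(T)$, which is condition (i). Matching (a) with (ii) and (b) with (i) then completes the argument.
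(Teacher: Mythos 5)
Your proposal is correct, and it takes exactly the route the paper itself points to: the paper does not reprove this lemma but imports it as Theorem 3 of Coulter--Henderson--Mathews, remarking that it is ``a consequence of the AGW Criterion given in Lemma 1.2 of \cite{akbary},'' which is precisely the derivation you carry out (the commuting square $T \circ f = \bar{f} \circ T$ via $T(H(x)) = H(T(x))$, then the fibre analysis identifying injectivity on cosets of $\ker(T)$ with $\varphi_y$ permuting $\ker(T)$). Your handling of the two delicate points --- that $H$ having coefficients in $\F$ is what makes $T \circ H = H \circ T$ work, and that $\varphi_y(\ker(T)) \subseteq \ker(T)$ upgrades injectivity to bijectivity on the finite set $\ker(T)$ --- is sound and complete.
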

 
The following consequence is straightforward.

\begin{cor}\label{cor: coulter}
 Let $q = p^m$ be a prime power and let $g \in \mathbb{F}_q[x]$. Then $xg(T(x))$ is a permutation of $\Fn$ if and only if $xg(x)$ is a permutation of $\F$ and $g(0) \neq 0$.
\end{cor}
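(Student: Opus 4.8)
The plan is to apply Lemma \ref{lem: coulter} directly, taking the $p$-polynomial $H$ there to be the zero polynomial, so that $f(x) = xg(T(x))$. With this choice $f$ permutes $\Fn$ if and only if the two conditions of that lemma hold: (i) $\varphi_y(x) = xg(y)$ permutes $\ker(T)$ for every $y \in \F$, and (ii) $\bar f(x) = xg(x)$ permutes $\F$. Condition (ii) is already precisely the permutation hypothesis on $xg(x)$ appearing in the statement, so the entire content of the proof is to show that, in the presence of (ii), condition (i) is equivalent to the single requirement $g(0) \neq 0$.

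First I would analyze (i). For a fixed $y \in \F$ the map $\varphi_y$ is simply multiplication by the scalar $g(y) \in \F$. Since $T = T_{q^n|q}$ is $\F$-linear, $\ker(T)$ is an $\F$-subspace of $\Fn$, nonzero for $n \geq 2$ (its dimension over $\F$ equals $n-1$). Multiplication by a nonzero element of $\F$ is an $\F$-linear automorphism of $\Fn$ carrying $\ker(T)$ onto itself, so $\varphi_y$ permutes $\ker(T)$ exactly when $g(y) \neq 0$; when $g(y) = 0$ the map is identically zero and fails to be injective on the nonzero space $\ker(T)$. Hence (i) is equivalent to the assertion that $g(y) \neq 0$ for all $y \in \F$.

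Next I would reduce ``$g(y) \neq 0$ for all $y \in \F$'' to ``$g(0) \neq 0$'' under assumption (ii). The key observation is that if $xg(x)$ permutes $\F$ then $g$ cannot vanish at any nonzero point: were $g(y_0) = 0$ for some $y_0 \neq 0$, then $y_0 g(y_0) = 0 = 0 \cdot g(0)$ would exhibit two distinct preimages of $0$ under $xg(x)$, contradicting injectivity. Thus, given (ii), the only point at which $g$ could possibly vanish is $0$ itself, so the clause ``$g(y)\neq 0$ for all $y$'' collapses to the single condition $g(0) \neq 0$.

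Combining the three steps yields the claim: $xg(T(x))$ permutes $\Fn$ if and only if (i) and (ii) hold, if and only if $xg(x)$ permutes $\F$ and $g(0) \neq 0$. I expect the only mild subtlety to be the bookkeeping around condition (i)—recognizing that $\varphi_y$ is merely scalar multiplication preserving the subspace $\ker(T)$, and that the ``for all $y$'' quantifier, once combined with injectivity of $xg(x)$, pins down exactly the extra hypothesis $g(0) \neq 0$ rather than a separate nonvanishing condition at every point.
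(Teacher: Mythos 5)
Your proof is correct and is exactly the derivation the paper intends: the paper gives no argument at all, calling the corollary a straightforward consequence of Lemma \ref{lem: coulter}, and your specialization $H = 0$ --- together with the observations that $\varphi_y$ is scalar multiplication by $g(y)$ preserving the $\F$-subspace $\ker(T)$, and that injectivity of $xg(x)$ already forces $g(y)\neq 0$ for all $y \neq 0$ --- is precisely that consequence made explicit. Your remark that $\ker(T)$ is nonzero only for $n \geq 2$ is a point of care the paper glosses over (for $n=1$ the statement itself degenerates), but it does not change the verdict.
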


The following represents an improvement to Theorem 3.7 in \cite{wu and lin}, replacing the hypothesis of $(n,r) =1$ there with the more general one of $(mn,r) = (m,r)$.

\begin{thm}\label{thm: improv}
 Let $q = p^m$ be a power of a prime number $p$, let $G \in \F[x]$, and let $n,r,$ be positive integers such that $d:= (m,r) = (mn,r)$, $p \nmid n$ and $(n, p^{(m,r)} - 1) = 1$.
Then 
$$
f(x) = ax^{p^r} + x\left( G(T_{q^n|q}(x)) - aT_{q^n|q}(x)^{p^r - 1}\right)
$$
is a complete permutation polynomial over $\Fn$ for each $a \in \F^*$ if and only if $xG(x)$ is a complete permutation polynomial over $\F$.
 \end{thm}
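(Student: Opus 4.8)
The plan is to recognize $f$ as an instance of the Coulter--Henderson--Mathews form $H(x) + xg(T(x))$ governed by Lemma~\ref{lem: coulter}, and then to dispatch the two resulting conditions separately, using Corollary~\ref{cor: InvKer2} to make the kernel condition automatic. First I would set $H(x) := ax^{p^r}$ and $g(y) := G(y) - ay^{p^r-1}$, observing that $H$ is a $p$-polynomial over $\Fn$ (since $a \in \F$) and $g \in \F[x]$, so that $f(x) = H(x) + xg(T_{q^n|q}(x))$ exactly. Lemma~\ref{lem: coulter} then says $f$ is a PP over $\Fn$ iff (i) $\varphi_y(x) := ax^{p^r} + (G(y) - ay^{p^r-1})x$ permutes $\ker(T_{q^n|q})$ for each $y \in \F$, and (ii) $\bar f(x) := ax^{p^r} + xg(x)$ permutes $\F$. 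A one-line computation collapses $\bar f(x) = ax^{p^r} + xG(x) - ax^{p^r} = xG(x)$, so condition (ii) is precisely the statement that $xG(x)$ is a PP over $\F$.

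The heart of the argument is disposing of condition (i). For fixed $y \in \F$ I would write $\varphi_y(x) = a\bigl(x^{p^r} - cx\bigr) = a\,L_{c,r}(x)$, where $c := y^{p^r-1} - G(y)/a$. Because $y$, $G(y)$, and $a$ all lie in $\F$ and $\F$ is closed under $x \mapsto x^{p^r}$, we have $c \in \F$. Since multiplication by $a \in \F^*$ is an $\F$-linear bijection preserving $\ker(T_{q^n|q})$ (as $T(ax) = aT(x)$), the map $\varphi_y$ permutes $\ker(T_{q^n|q})$ iff $L_{c,r}$ does. The hypotheses $d = (mn,r) = (m,r)$, $p \nmid n$, and $(n, p^d - 1) = 1$ are exactly those of Corollary~\ref{cor: InvKer2}, which guarantees that $L_{c,r}$ permutes $\ker(T_{q^n|q})$ for \emph{every} $c \in \F$; hence condition (i) holds automatically for all $y$, independently of $a$. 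Combining with the previous paragraph, $f$ is a PP over $\Fn$ iff $xG(x)$ is a PP over $\F$.

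Finally I would run the identical analysis on the complete-mapping partner $f(x)+x$. Since $f(x) + x = ax^{p^r} + x\bigl((G(T_{q^n|q}(x)) + 1) - aT_{q^n|q}(x)^{p^r-1}\bigr)$, this is the same form with $G$ replaced by $G+1$. Repeating the two steps above (condition (i) is again automatic via Corollary~\ref{cor: InvKer2}, the relevant $c$ still lying in $\F$), I obtain that $f(x)+x$ is a PP over $\Fn$ iff $x(G(x)+1) = xG(x)+x$ is a PP over $\F$. Therefore $f$ is a CPP over $\Fn$ iff both $xG(x)$ and $xG(x)+x$ are PPs over $\F$, which is exactly the assertion that $xG(x)$ is a CPP over $\F$. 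As this condition on $xG(x)$ is independent of $a$, while the equivalences hold for each fixed $a$, the chain of equivalences is valid for each $a \in \F^*$ simultaneously, yielding the stated ``for each $a \in \F^*$'' formulation.

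The only real obstacle is the bookkeeping in reducing $\varphi_y$ to the binomial $L_{c,r}$ and verifying that $c \in \F$ so that Corollary~\ref{cor: InvKer2} genuinely applies; once that reduction is in place, the remainder is a direct assembly of Lemma~\ref{lem: coulter} with the definition of a complete mapping.
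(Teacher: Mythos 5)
Your proposal is correct and takes essentially the same route as the paper: both apply Lemma~\ref{lem: coulter} to $f$ and to $f(x)+x$, collapse $\bar{f}(x)$ to $xG(x)$, and invoke Corollary~\ref{cor: InvKer2} (via the reduction $\varphi_y = aL_{c,r}$ with $c \in \F$) so that the kernel condition holds automatically for every $a \in \F^*$. Your write-up simply spells out more explicitly the details the paper leaves terse, namely the factorization $\varphi_y(x) = a[x^{p^r} - cx]$, the fact that multiplication by $a$ preserves $\ker(T_{q^n|q})$, and the observation that $f(x)+x$ is the same construction with $G$ replaced by $G+1$.
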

 
 \begin{proof}
  Note that both $f(x)$ and $f(x) + x$ are instances of Lemma \ref{lem: coulter}. It follows that $f$ is a complete permutation polynomial over $\Fn$ if and only if 
  $\varphi_y(x) := ax^{p^r} + x(G(y) - ay^{p^r-1}) = a[x^{p^r} + x(G(y) - ay^{p^r-1})/a]$ is a complete permutation polynomial over $\ker(T_{q^n|q})$, 
  and $\bar{f}(x) := ax^{p^r} + x(G(x) - ax^{p^r-1}) = xG(x)$ is a complete permutation polynomial over $\F$. The former holds for each $a \in \F^*$ 
  by Corollary \ref{cor: InvKer2}, whereas the latter implies the result.
 \end{proof}
 
 Theorem \ref{thm: improv} generalizes \cite[Theorem 3.7]{wu and lin}, \cite[Theorem 3]{chapuy0}, \cite[Theorem 4.1]{simona}, \cite[Theorem 2.1]{wu: even cpp}, and \cite[Theorem 6]{zhang}.
 Letting $G = b \in \F \setminus\{-1, 0\}$ be arbitrary in Theorem \ref{thm: improv}, the following corollary is straightforward.
 
 \begin{cor}\label{cor: improv}
   Let $q = p^m$ be a power of a prime number $p$, and let $n,r,$ be positive integers such that $d := (m,r) = (mn,r)$, $p \nmid n$ and $(n, p^{(m,r)} - 1) = 1$. 
   Then 
   $$
   f(x) = a\left(x^{p^r} - xT_{q^n|q}(x)^{p^r - 1} \right) + bx
   $$
   is a complete permutation polynomial over $\Fn$ for each $a \in \F$ and each $b \in \F \setminus\{-1, 0\}$.
  \end{cor}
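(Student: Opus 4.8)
The plan is to obtain this directly from Theorem~\ref{thm: improv} by specializing the polynomial $G$ to the constant $b$. First I would substitute $G = b$ into the expression for $f$ appearing in Theorem~\ref{thm: improv}. Since $G(T_{q^n|q}(x)) = b$ is then constant, we get
\[
f(x) = ax^{p^r} + x\left( b - aT_{q^n|q}(x)^{p^r-1}\right) = a\left(x^{p^r} - xT_{q^n|q}(x)^{p^r-1}\right) + bx,
\]
which is precisely the polynomial in the statement. The hypotheses on $q,n,r$ (namely $d := (m,r) = (mn,r)$, $p \nmid n$, and $(n, p^{(m,r)} - 1) = 1$) are identical in the two statements, so the corollary is exactly an instance of the theorem, once the theorem's equivalence condition is checked.

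Next I would verify the equivalence condition of Theorem~\ref{thm: improv}, which asserts that $f$ is a complete permutation polynomial over $\Fn$ for each $a \in \F^*$ if and only if $xG(x)$ is a complete permutation polynomial over $\F$. With $G = b$ we have $xG(x) = bx$, a linear polynomial. This is a permutation of $\F$ precisely because $b \neq 0$, and $bx + x = (b+1)x$ is a permutation of $\F$ precisely because $b \neq -1$. Hence $bx$ is a complete permutation polynomial over $\F$ exactly under the hypothesis $b \in \F \setminus \{-1,0\}$, and Theorem~\ref{thm: improv} then yields that $f$ is a complete permutation polynomial over $\Fn$ for every $a \in \F^*$.

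The one point not covered by the theorem is the endpoint $a = 0$, since Theorem~\ref{thm: improv} requires $a \in \F^*$, whereas the corollary allows all $a \in \F$. I would dispose of this case by hand: when $a = 0$ the polynomial collapses to $f(x) = bx$, which permutes $\Fn$ because $b \neq 0$, while $f(x) + x = (b+1)x$ permutes $\Fn$ because $b \neq -1$; thus $f$ is trivially a complete permutation polynomial. Combining this with the case $a \neq 0$ establishes the claim for all $a \in \F$.

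There is essentially no substantial obstacle in this argument; the real content was already absorbed into the proof of Theorem~\ref{thm: improv} (which in turn rests on Corollary~\ref{cor: InvKer2} and Lemma~\ref{lem: coulter}). The only steps demanding a moment's attention are recognizing that the constant choice $G = b$ reduces the condition ``$xG(x)$ is a complete permutation polynomial over $\F$'' to the simultaneous non-vanishing of $b$ and $b+1$ — which is exactly what forces the restriction $b \in \F \setminus \{-1,0\}$ — and separately handling the value $a = 0$, which lies outside the range of the theorem.
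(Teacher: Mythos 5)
Your proof is correct and follows exactly the route the paper intends: the paper simply remarks that the corollary follows by letting $G = b \in \F\setminus\{-1,0\}$ in Theorem~\ref{thm: improv}, which is what you carry out, including the verification that $bx$ is a complete mapping of $\F$ precisely when $b \neq 0, -1$. Your explicit treatment of the edge case $a = 0$ (which the theorem's hypothesis $a \in \F^*$ excludes but the corollary's $a \in \F$ permits) is a detail the paper leaves implicit, and it is handled correctly.
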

  
  Finally we give a recursive construction of CPPs involving multi-trace functions.

 \begin{cor}\label{cor: recursive}
  Let $q = p^m$ be a power of a prime number $p$, let $n,r$ and $1 =: d_0 \mid d_1 \mid \cdots \mid d_n :=n$ be positive integers such that $p \nmid n$,
  $(n,p^{(m,r)} - 1) = 1$ and
  $(d_i m,r) = (d_j m, r)$ for each $0 \leq i,j \leq n$. Let $a_0,\ldots, a_{n-1}$, be such that for $0 \leq k \leq n-1$, $a_k \in \mathbb{F}_{q^{d_k}}$ and $\sum_{l=0}^{k}a_l \neq 0$.
  Let $f_0 \in \F[x]$.
  Then 
  $$
  f(x) = x \left( \sum_{k=0}^{n-1}a_k \left(x^{p^r-1} - T_{q^n|q^{d_k}}(x)^{p^r-1}\right)  + \dfrac{f_0\left(T_{q^n|q}(x)  \right)}{T_{q^n|q}(x)}  \right) 
  $$
  is a complete permutation polynomial over $\Fn$ if and only if $f_0$ is a complete permutation polynomial over $\F$ satisfying $f_0(0) = 0$.
 \end{cor}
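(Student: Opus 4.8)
\section*{Proof proposal}

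The plan is to prove the equivalence by induction up the tower of fields $\mathbb{F}_q = \mathbb{F}_{q^{d_0}} \subseteq \mathbb{F}_{q^{d_1}} \subseteq \cdots \subseteq \mathbb{F}_{q^{d_n}} = \mathbb{F}_{q^n}$, peeling off one extension $\mathbb{F}_{q^{d_{k+1}}}/\mathbb{F}_{q^{d_k}}$ at a time by means of Theorem \ref{thm: improv}. For $0 \leq k \leq n$ I would introduce the intermediate polynomial
$$
f_k(x) = x\left( \sum_{l=0}^{k-1} a_l\left( x^{p^r-1} - T_{q^{d_k}|q^{d_l}}(x)^{p^r-1}\right) + \frac{f_0(T_{q^{d_k}|q}(x))}{T_{q^{d_k}|q}(x)} \right) \in \mathbb{F}_{q^{d_k}}[x],
$$
so that $f_n = f$ and the induction bottoms out at $f_0$. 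Writing $f_0(x) = x\,h_0(x)$ (possible precisely because $f_0(0) = 0$), the quotient $f_0(T(x))/T(x)$ means $h_0(T(x))$; one checks inductively that every $f_k(0) = 0$, so that $g_k(x) := f_k(x)/x$ is a genuine polynomial over $\mathbb{F}_{q^{d_k}}$. This is exactly what is needed to place $f_k$ into the ``$G$'' slot of Theorem \ref{thm: improv}.

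The heart of the matter is the recursion identity
$$
f_{k+1}(x) = A_k\, x^{p^r} + x\Bigl( g_k\bigl(T_{q^{d_{k+1}}|q^{d_k}}(x)\bigr) - A_k\, T_{q^{d_{k+1}}|q^{d_k}}(x)^{p^r-1} \Bigr), \qquad A_k := \sum_{l=0}^{k} a_l,
$$
which I would verify by substituting $y = T_{q^{d_{k+1}}|q^{d_k}}(x)$ into $g_k(y) = f_k(y)/y$ and collapsing each nested trace via transitivity, $T_{q^{d_k}|q^{d_l}} \circ T_{q^{d_{k+1}}|q^{d_k}} = T_{q^{d_{k+1}}|q^{d_l}}$. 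Equating the coefficient of $x^{p^r}$ forces the scalar handed to Theorem \ref{thm: improv} at level $k$ to be the partial sum $A_k$ rather than the single coefficient $a_k$ (this is the one genuinely non-obvious point), after which the coefficient of $T_{q^{d_{k+1}}|q^{d_k}}(x)^{p^r-1}$ matches automatically and equals $-a_k$, recovering $f_{k+1}$. The standing hypothesis $\sum_{l=0}^{k} a_l \neq 0$ is precisely the requirement $A_k \neq 0$; together with $a_l \in \mathbb{F}_{q^{d_l}} \subseteq \mathbb{F}_{q^{d_k}}$ for $l \leq k$ this places $A_k \in \mathbb{F}_{q^{d_k}}^*$, as Theorem \ref{thm: improv} demands.

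It remains to confirm that the numerical hypotheses of Theorem \ref{thm: improv} hold for each layer $\mathbb{F}_{q^{d_{k+1}}}/\mathbb{F}_{q^{d_k}}$, whose base field has $p^{m d_k}$ elements and whose degree is $N_k := d_{k+1}/d_k$. From $\prod_{k} N_k = d_n/d_0 = n$ and $p \nmid n$ I get $p \nmid N_k$; since each $N_k \mid n$ and $(n, p^{(m,r)}-1) = 1$, and since the standing assumption $(d_i m, r) = (d_j m, r)$ makes all these gcds equal to $d = (m,r)$, I get $(N_k, p^{(m d_k, r)} - 1) = (N_k, p^{d}-1) = 1$; the same assumption gives $(m d_k, r) = (m d_k N_k, r) = (m d_{k+1}, r)$, which is exactly the hypothesis of Theorem \ref{thm: improv} for this layer. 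Theorem \ref{thm: improv}, applied with the specific scalar $a = A_k \in \mathbb{F}_{q^{d_k}}^*$, then states that $f_{k+1}$ is a complete mapping of $\mathbb{F}_{q^{d_{k+1}}}$ if and only if $x\,g_k(x) = f_k(x)$ is a complete mapping of $\mathbb{F}_{q^{d_k}}$. Chaining this equivalence from $k = n-1$ down to $k = 0$ reduces the claim to the base case $x\,g_0(x) = f_0(x)$ being a complete mapping of $\mathbb{F}_q$; the degenerate behaviour of $f_0(x)/x$ at the origin (and an argument in the spirit of Corollary \ref{cor: coulter}) is what pins the condition $f_0(0) = 0$ into the final equivalence.

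The step I expect to be the main obstacle is the recursion identity, and within it the realization that the correct scalar for Theorem \ref{thm: improv} at level $k$ is the partial sum $A_k = \sum_{l=0}^{k} a_l$ and not $a_k$; once the telescoping through trace transitivity is in place the rest is careful bookkeeping of indices and of the coefficients of $x^{p^r}$ and of $T_{q^{d_{k+1}}|q^{d_k}}(x)^{p^r-1}$. A secondary, more routine nuisance is tracking the convention for the quotients $f_k(x)/x$ so that the identities hold as functions (not merely formally) at $x = 0$, which is where the hypothesis $f_0(0) = 0$ is ultimately consumed.
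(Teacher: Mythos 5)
Your induction up the tower is, in substance, the paper's own proof: the paper defines the same intermediate polynomials (recursively, recovering your closed form afterwards by the same telescoping via transitivity of the trace), it hands Theorem \ref{thm: improv} exactly the partial sums $c_k := \sum_{l=0}^{k} a_l \in \mathbb{F}_{q^{d_k}}^*$ as the scalar at level $k$, and it checks the same three arithmetic conditions for each layer $\mathbb{F}_{q^{d_{k+1}}}/\mathbb{F}_{q^{d_k}}$. Up to the direction in which the recursion identity is derived, the two arguments coincide.

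The genuine gap is the necessity of $f_0(0)=0$. You impose it at the outset, writing $f_0(x)=x\,h_0(x)$ and reading every quotient as a polynomial quotient, so your chain of equivalences proves only: \emph{if} $f_0(0)=0$, then $f$ is a CPP iff $f_0$ is. But the corollary is stated for arbitrary $f_0\in\F[x]$; under the paper's convention $1/g:=g^{q-2}$, the term $f_0(T_{q^n|q}(x))/T_{q^n|q}(x)$ means $f_0(T_{q^n|q}(x))\,T_{q^n|q}(x)^{q-2}$, a polynomial for every $f_0$ that vanishes identically on $\ker(T_{q^n|q})$. (Your reading $h_0(T_{q^n|q}(x))$ is not the same map: it differs by the constant $h_0(0)$ on $\ker(T_{q^n|q})$, so strictly you prove the claim for a different polynomial.) Under the paper's reading, the forward direction has real content: one must show $f$ fails to be a CPP whenever $f_0(0)\neq 0$, including when $f_0$ itself is a complete mapping, e.g.\ $f_0(x)=x+1$ with $p$ odd. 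Your gesture toward ``an argument in the spirit of Corollary \ref{cor: coulter}'' supplies nothing here: $f$ is not of the form $xg(T_{q^n|q}(x))$, and that corollary concerns permutations, not complete mappings. The paper instead ends its induction at the equivalence ``$f_1$ is a CPP iff $H(x):=x^{q-1}f_0(x)$ is a CPP over $\F$'' and then gives a dedicated argument that $H$ is a CPP iff $f_0$ is a CPP with $f_0(0)=0$.

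Be aware, too, that this is not the ``routine nuisance'' you call it; it is the delicate point, and it cannot be filled as stated. The paper's own argument is flawed (from ``$f_0$ permutes $\F^*$'' it infers that $A:=f_0-f_0(0)$ permutes $\F^*$, whereas $A(\F^*)=\F^*-f_0(0)\neq\F^*$), and the equivalence genuinely fails: over $q=3$, $f_0(x)=2x^2+x+1$ fixes $1$ and $2$ and sends $0\mapsto 1$, so it is not a PP, yet $H(x)=x^2f_0(x)$ induces the identity, a complete mapping. Feeding this $f_0$ into the corollary with $n=5$, $r=1$, the tower $1\mid 1\mid 1\mid 1\mid 1\mid 5$, $a_0=1$, $a_1=\cdots=a_4=0$, and $T:=T_{3^5|3}$ gives $f(x)=x\bigl(x^2-T(x)^2+f_0(T(x))T(x)\bigr)=x^3$ (using $T^3=T$ and $3T=0$), which \emph{is} a complete mapping of $\mathbb{F}_{3^5}$ since $x^3+x$ has trivial kernel there. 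So the clause your proof omits is exactly the clause that fails under the paper's convention; the polynomial-quotient reading you adopt (restricting to $x\mid f_0$) is arguably the only way to make the statement correct, but then your proof should say explicitly that the condition $f_0(0)=0$ is a definedness hypothesis rather than something being proved.
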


\begin{proof}
For the sake of brevity denote $T_{j}^{k} := T_{q^{d_k}|q^{d_j}}$ if $j \leq k$.
 For each $0 \leq i \leq n-1$, let $c_{i} := \sum_{l=0}^{i}a_l \in \mathbb{F}_{q^{d_{i}}}^*$ and recursively define the polynomials 
 $$
 f_{i+1}(x) := x\left( c_i x^{p^r-1} - c_i T^{i+1}_{i}(x)^{p^r-1} + \dfrac{f_i\left( T_{i}^{i+1}(x)   \right)}{T_{i}^{i+1}(x)}  \right) \in \mathbb{F}_{q^{d_{i+1}}}[x].
 $$
 If we substitute $G(x)$, $m$, $n$, $q$, in Theorem \ref{thm: improv}, with $x^{q^{d_i} - 2}f_i(x)$, $d_i m$, $d_{i+1}/d_i$, $q^{d_i}$, respectively, then each $f_{i+1}$ satisfies the conditions of Theorem \ref{thm: improv}. 
 Indeed, we have $(\frac{d_{i+1}}{d_i} \cdot d_i m, r) = (d_{i+1} m, r) = (d_i m, r)$ by assumption; $p \nmid d_{i+1}/d_i$ (because $p \nmid n$) and $(\frac{d_{i+1}}{d_i}, p^{(d_i m, r)} - 1) = (d_{i+1}/d_i, p^{(m,r)} - 1) = 1$
 since $(d_{i+1}/d_i) \mid n$ and $(n, p^{(m,r)} -1 ) = 1$ by assumption as well. Then it follows from Theorem \ref{thm: improv} that $f_{i+1}$ is a CPP over 
 $\mathbb{F}_{q^{d_i (d_{i+1}/d_{i})}} = \mathbb{F}_{q^{d_{i+1}}}$ if and only if $x^{q^{d_i} - 1} f_i(x)$ is a CPP over $\mathbb{F}_{q^{d_i}}$. Note that
\begin{equation*}
 x^{q^{d_i} - 1} f_i(x)|_{\mathbb{F}_{q^{d_i}}} 
 = 
 \begin{cases}
  f_i(x), \mbox{ if } i \geq 1 \mbox{ (since $f_i(0) = 0$ for each $i \geq 1$);} \\
  x^{q-1}f_0(x), \mbox{ if } i = 0.
 \end{cases}
 \end{equation*}
Denote $H(x) := x^{q-1}f_0(x)$.
We claim that $f_1$ is a CPP over $\mathbb{F}_{q^{d_1}}$ if and only if $f_0$ is a CPP over $\F$ satisfying $f_0(0) = 0$.
Indeed, if $f_0(0) = 0$, then $H|_{\F} = f_0$ implying that $f_1$ is CPP over $\mathbb{F}_{q^{d_1}}$ if and only if $f_0$ is a CPP over $\F$. 
On the other hand, if $f_0(0) \neq 0$, write $f_0(x) = A(x) + b$ for some $A \in \F[x]$ such that $A(0) = 0$ and some $b \in \F^*$. 
We need to show that $H$ is not a CPP over $\F$. On the contrary, suppose that $H$ is a CPP (and hence PP) over $\F$. Since $H(0) = 0$, 
it follows that $H$ permutes $\F^*$. But $H|_{\F^*} = f_0|_{\F^*}$. 
Then $f_0$ permutes $\F^*$. This in turn implies that $A$ permutes $\F^*$. 
Hence $-b \in A(\F^*)$ giving $f_0(e) = 0$ for some $e \in \F^*$. But then $H(e) = H(0) = 0$, a contradiction. The claim follows.
Now induction yields that for $1 \leq i \leq n$, $f_i$ is a CPP over $\mathbb{F}_{q^{d_i}}$ if and only if $f_0$ is a CPP over $\F$ satisfying $f_0(0) = 0$.
Next, assuming $f_0(0) = 0$, we claim that for $0 \leq i \leq n$, 
\begin{equation}\label{eqn: induction}
f_i(x) = x \left( \sum_{k=0}^{i-1}a_k\left( x^{p^r-1} - T_k^i(x)^{p^r-1} \right) + \frac{f_0 \left(T_0^i(x)   \right)}{T_0^i(x)} \right).	
\end{equation}
Proceed by induction on $0 \leq i \leq n$.
When $i = 0$ the claim is clear. Assume (\ref{eqn: induction}) holds for some $i < n$. Then by the transitivity
of the trace function, we get 
$$
\dfrac{f_i\left(T^{i+1}_i(x)\right)}{T^{i+1}_i(x)} = \sum_{k=0}^{i-1}a_k \left( T_i^{i+1}(x)^{p^r-1} - T_k^{i+1}(x)^{p^r-1}  \right) + \dfrac{f_0\left( T_0^{i+1}(x) \right)}{T_0^{i+1}(x)}.
$$
Thus, by the definition of $f_{i+1}$, we have
\begin{align*}
f_{i+1}(x) 
&=
x \Biggr(c_ix^{p^r-1} - c_i T_i^{i+1}(x)^{p^r-1} \\
& \hspace{5em} +\sum_{k=0}^{i-1}a_k \left( T_i^{i+1}(x)^{p^r-1} - T_k^{i+1}(x)^{p^r-1}   \right) + \dfrac{f_0\left( T_0^{i+1}(x)  \right)}{T_0^{i+1}(x) }     \Biggr)
\\
&=
x \Biggr(\sum_{k=0}^{i} a_k x^{p^r-1} - \sum_{k=0}^i a_k T_i^{i+1}(x)^{p^r-1} \\
&  \hspace{5em} + \sum_{k=0}^{i-1}a_k \left( T_i^{i+1}(x)^{p^r-1} - T_k^{i+1}(x)^{p^r-1}   \right) + \dfrac{f_0\left( T_0^{i+1}(x)  \right)}{T_0^{i+1}(x) }     \Biggr)\\
&= 
x \Biggr( \sum_{k=0}^{i} a_k x^{p^r-1} - \sum_{k=0}^i a_k T^{i+1}_k(x)^{p^r-1}  + \dfrac{f_0\left( T_0^{i+1}(x)  \right)}{T_0^{i+1}(x) }     \Biggr)\\
&=
x \Biggr( \sum_{k=0}^{i} a_k\biggr( x^{p^r-1}- T^{i+1}_k(x)^{p^r-1}  \biggr) + \dfrac{f_0\left( T_0^{i+1}(x)  \right)}{T_0^{i+1}(x) }  \Biggr),
\end{align*}
satisfying the expression in (\ref{eqn: induction}). The claim follows. Now it only remains to notice that $f = f_n$.
\end{proof}

Corollary \ref{cor: recursive} generalizes \cite[Corollary 4.4]{simona} and \cite[Corollary 2.3]{wu: even cpp}.

\section{Inverse of the complete mapping}

In this section we obtain the compositional inverse of the complete mapping of Theorem \ref{thm: improv}.
See Theorem \ref{thm: CPP inverse} and Corollary \ref{cor: CPP inverse} for this. To achieve
this, we make use of the result in Section 2 of the compositional inverses of linearized binomials permuting the 
kernel of the trace map. Since inverses of complete mappings are also complete mappings, this signifies the construction of a new, albeit complicated, class 
of complete mappings. 

We first introduce some notation. Let $f_y \in \F[x]$ be a polynomial with parameter $y \in \F$ and inducing an injective map on a subset $V$ of $\F$. Let $f_y|_V^{-1} \in \F[x]$ be the polynomial inducing
the inverse map of $f|_V$. Then, for any $g \in \F[x]$, we mean by $f_{g(x)}|_V^{-1}(x) \in \F[x]$ the polynomial obtained by substituting $y$ with $g(x) \in \F[x]$ in the expression for
$f_y|_V^{-1}(x)$.

\begin{lem}[{\bf Corollary 3.14, \cite{tuxanidy}}]\label{lem: inverse coulter}
 Using the same notations of Lemma \ref{lem: coulter} and assuming that $f$ permutes $\Fn$, the following
 two results hold:
 
 (i) If $p \mid n$ or $x \in \Fn$ is such that $\varphi_{\bar{f}^{-1}(T(x))}$ permutes $\F$,
 then $\varphi_{\bar{f}^{-1}(T(x))}$ permutes $\Fn$, and the preimage of $x$ under $f$
 is given by
 $$
 f^{-1}(x) = \varphi_{\bar{f}^{-1}(T(x))}^{-1}(x),
 $$
 where $\bar{f}^{-1} := \bar{f}|_{\F}^{-1}$.
 
 (ii) If $p \nmid n$, then the compositional inverse of $f$ over $\Fn$ is given by
 $$
 f^{-1}(x) = n^{-1} \bar{f}^{-1}(T(x)) + \varphi_{\bar{f}^{-1}(T(x))}|_{\ker(T)}^{-1}\left( x - n^{-1}T(x)  \right).
 $$
\end{lem}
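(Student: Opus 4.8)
The plan is to run the Akbary--Ghioca--Wang machinery already packaged in Lemma~\ref{lem: coulter}, with the trace $T=T_{q^n|q}$ as the intertwining map. The first step is to record the commutation identity $T\circ H=H\circ T$: since $H=\sum_j b_j x^{p^j}$ has coefficients $b_j\in\F$, expanding $T(H(x))=\sum_{i,j}b_j^{q^i}x^{p^jq^i}$, using $b_j^{q^i}=b_j$ and the Frobenius identity $\sum_i x^{q^ip^j}=T(x)^{p^j}$, gives $T(H(x))=H(T(x))$. From this I obtain at once $T\circ f=\bar f\circ T$ and, for every $v\in\Fn$ and $y\in\F$, $T(\varphi_y(v))=\varphi_y(T(v))$ (the right side being $\varphi_y$ evaluated at the scalar $T(v)\in\F$). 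Three consequences will be used repeatedly: $\varphi_y(\F)\subseteq\F$ and $\varphi_y(\ker T)=\ker T$ (the containment follows from the identity with $v\in\ker T$ and $H(0)=0$, and equality from injectivity of $\varphi_y|_{\ker T}$, which holds by Lemma~\ref{lem: coulter}(i) since $f$ permutes $\Fn$); and $T|_\F$ is multiplication by $n$, because $c^{q^i}=c$ for $c\in\F$. Note also that $\bar f^{-1}$ exists by Lemma~\ref{lem: coulter}(ii).

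The common core of both parts is a single identity for the preimage. Writing $u:=f^{-1}(x)$ and $y_0:=\bar f^{-1}(T(x))$, applying $T\circ f=\bar f\circ T$ to $f(u)=x$ gives $T(u)=\bar f^{-1}(T(x))=y_0$; hence $f(u)=\varphi_{T(u)}(u)=\varphi_{y_0}(u)=x$. Thus the preimage of $x$ is always a solution of $\varphi_{y_0}(u)=x$ lying in the fiber $T^{-1}(y_0)$, and everything reduces to inverting the additive map $\varphi_{y_0}$ in the appropriate sense.

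For part (i) the pivotal reduction is that \emph{$\varphi_{y_0}$ permutes $\Fn$ if and only if it permutes $\F$}. Indeed $\varphi_{y_0}$ is additive, preserves $\ker T$, and is injective there, so an additive endomorphism preserving a subspace is bijective on $\Fn$ iff the induced map on $\Fn/\ker T$ is bijective; via the isomorphism $\Fn/\ker T\cong\F$ induced by $T$, the intertwining identity identifies that induced map with $\varphi_{y_0}|_\F$. This settles the branch in which $\varphi_{y_0}$ permutes $\F$ outright. For the branch $p\mid n$ the main and prettiest point is that then $T|_\F=n\cdot\mathrm{id}=0$, so $\F\subseteq\ker T$; the restriction of the injective map $\varphi_{y_0}|_{\ker T}$ to the subspace $\F$ is then injective, hence $\varphi_{y_0}|_\F$ is a bijection of the finite set $\F$, and the reduction yields that $\varphi_{y_0}$ permutes $\Fn$. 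Once $\varphi_{y_0}$ is a global bijection, uniqueness of preimages forces $u=\varphi_{y_0}^{-1}(x)$, which is exactly the asserted formula.

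For part (ii) I would use $p\nmid n$ to split $\Fn=\F\oplus\ker T$ (valid since $T|_\F$ is multiplication by the unit $n$, so $\F\cap\ker T=\{0\}$ and the dimensions add up). Write the unknown as $u=c+w$ with $c\in\F$ and $w\in\ker T$. Applying $T$ and using $T(u)=y_0$ isolates $c=n^{-1}y_0=n^{-1}\bar f^{-1}(T(x))$, the first summand. For $w$, expand $x=\varphi_{y_0}(u)=\varphi_{y_0}(c)+\varphi_{y_0}(w)$, where $\varphi_{y_0}(c)\in\F$ and $\varphi_{y_0}(w)\in\ker T$; applying $T$ and again using $T|_\F=n\cdot\mathrm{id}$ gives $\varphi_{y_0}(c)=n^{-1}T(x)$, so $\varphi_{y_0}(w)=x-n^{-1}T(x)\in\ker T$, which inverts via the bijection $\varphi_{y_0}|_{\ker T}$ to $w=\varphi_{y_0}|_{\ker T}^{-1}(x-n^{-1}T(x))$. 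Then $u=c+w$ is the stated expression. The genuine obstacle in this whole argument is the $p\mid n$ case of part (i): the rest is bookkeeping around the intertwining, whereas there one must spot both the reduction ``permutes $\Fn$ iff permutes $\F$'' and the fact that $p\mid n$ collapses $\F$ into $\ker T$, making injectivity of $\varphi_{y_0}$ on $\ker T$ do all the work.
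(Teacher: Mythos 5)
Your proof is correct. Note, however, that the paper itself offers no proof of this lemma: it is imported wholesale as Corollary 3.14 of \cite{tuxanidy}, so there is no internal argument to compare yours against; the comparison is with the cited source, where the formula arises as a specialization of a general theorem expressing the compositional inverse of an AGW-type permutation through the inverses of two restricted bijections. Your write-up reconstructs exactly that special case from scratch, and the ingredients are the right ones, correctly justified: the intertwining identities $T\circ f=\bar f\circ T$ and $T\circ\varphi_y=\varphi_y\circ T$, which follow from $H$ having coefficients in $\F$; the facts (from Lemma \ref{lem: coulter}) that $\varphi_y$ permutes $\ker(T)$ and $\bar f$ permutes $\F$; the identification, via $T$, of the map that $\varphi_{y_0}$ induces on $\Fn/\ker(T)$ with $\varphi_{y_0}|_{\F}$, giving the equivalence ``$\varphi_{y_0}$ permutes $\Fn$ iff it permutes $\F$'' (the injectivity of $\varphi_{y_0}|_{\ker(T)}$ is what makes the ``if'' direction valid, and you do invoke it); the observation that $p\mid n$ forces $\F\subseteq\ker(T)$, so injectivity on $\ker(T)$ already makes $\varphi_{y_0}|_{\F}$ a permutation; and the decomposition $\Fn=\F\oplus\ker(T)$ when $p\nmid n$, whose two components, after applying $T$ to $\varphi_{y_0}(u)=x$, produce precisely the two summands in (ii). Compared with citing \cite{tuxanidy}, your argument is more elementary and self-contained (only quotient and direct-sum linear algebra over finite fields), at the cost of being tailored to the shape $f(x)=H(x)+xg(T(x))$ rather than covering the general class treated in the cited work.
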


Recall that $1/f := f^{q-2}$ for a polynomial $f$ if $\im(f) \subseteq \F$ when $f$ is viewed as a mapping.
\begin{thm}\label{thm: CPP inverse}
Assume that the polynomial $f$ of Theorem \ref{thm: improv} is a permutation polynomial over $\Fn$. Then $\bar{f}(x) := x G(x)$ is a permutation polynomial over $\F$ and $\varphi_y(x) := a x^{p^r} + x (G(y) - a y^{p^r-1})$
induces a permutation of $\ker(T)$ for each $y \in \F$. Let $\bar{f}^{-1} \in \F[x]$ denote the compositional inverse of $\bar{f}$ over $\F$ and define the polynomial
$$
C(x) := \bar{f}^{-1}\left(T(x)  \right)^{p^r-1} - a^{-1}G\left( \bar{f}^{-1}\left(T(x)    \right)   \right) \in \F[x].
$$
 Then the following three results hold:
 
 (i) If $x \in \Fn$ is such that $C(x) = 0$, then the preimage of $x$ under $f$ is given by 
 $$
 f^{-1}(x) = \left( \dfrac{x}{a}  \right)^{\frac{q^n}{p^r}}.
 $$

 (ii) Otherwise if $x \in \Fn$ is such that $N_{q|p^d}(C(x)) \neq 1$, then $N_{q|p^d}(C(x))^n \neq 1$, and the preimage of $x$ under $f$ is given by
$$
 f^{-1}(x) = \dfrac{N_{q|p^d}\left(C(x) \right)^n} {1 - N_{q|p^d}\left(C(x) \right)^n}
 \sum_{i=0}^{\frac{mn}{d}-1} C(x)^{-\frac{p^{(i+1)r} -1}{p^r-1}} \left(a^{-1}x  \right)^{p^{ir}}.
$$
 
 (iii) Otherwise the preimage of $x \in \Fn$ under $f$ is given by
 $$
  f^{-1}(x) = n^{-1}\left( \bar{f}^{-1}(T(x)) + \sum_{j=0}^{\frac{m}{d}-1}  C(x)^{-\frac{p^{(j+1)r} - 1}{p^r - 1}}
 \left(  a^{-1} \sum_{k=1}^{n - 1} k   x^{p^{km\frac{r}{d}}} \right)^{p^{jr}}\right).
$$

\end{thm}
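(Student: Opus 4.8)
The plan is to recognize $f$ as an instance of the additive-plus-$xg(T)$ shape governed by Lemma~\ref{lem: coulter} and to invert it through Lemma~\ref{lem: inverse coulter}. Writing $f(x) = H(x) + x\,g(T(x))$ with the $p$-polynomial $H(x) = ax^{p^r}$ and $g(y) = G(y) - ay^{p^r-1}$, one finds $\bar f(x) = H(x) + xg(x) = xG(x)$, and
$\varphi_y(x) = ax^{p^r} + x(G(y) - ay^{p^r-1}) = a\,L_{C,r}(x)$, where $L_{C,r}(x) = x^{p^r} - Cx$ and $C = y^{p^r-1} - a^{-1}G(y)$. Substituting the parameter $y = \bar f^{-1}(T(x))$ turns $C$ into exactly the quantity $C(x)$ of the statement. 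Thus the preimage computation reduces to inverting the single linearized binomial $L_{C(x),r}$ on the appropriate space, and the three cases of the theorem are nothing but the three possible behaviours of this binomial: degenerate Frobenius ($C(x)=0$), full permutation of $\Fn$ ($N_{q|p^d}(C(x))\neq1$), and permutation of $\ker(T)$ only ($N_{q|p^d}(C(x))=1$).

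I would dispose of the two ``full-permutation'' cases via part~(i) of Lemma~\ref{lem: inverse coulter}, which gives $f^{-1}(x) = \varphi_{\bar f^{-1}(T(x))}^{-1}(x)$ whenever $\varphi_y$ permutes $\F$ (hence $\Fn$). In case~(i), $C(x)=0$ makes $\varphi_y(x) = ax^{p^r}$ a scaled Frobenius, whose inverse over $\Fn=\mathbb{F}_{p^{mn}}$ is $x\mapsto (x/a)^{p^{mn-r}} = (x/a)^{q^n/p^r}$. In case~(ii), the binomial permutation criterion over $\F$ (with $d=(m,r)$) shows $N_{q|p^d}(C(x))\neq1$ forces $\varphi_y$ to permute $\F$; I would first upgrade this to $N_{q|p^d}(C(x))^n\neq1$ using $(n,p^d-1)=1$ exactly as in Corollary~\ref{cor: InvKer2}, record $N_{q^n|p^d}(C)=N_{q|p^d}(C)^n$ (transitivity of the norm together with $N_{q^n|q}(C)=C^n$ for $C\in\F$), and then read off $\varphi_y^{-1}(x) = L_{C,r}^{-1}(a^{-1}x)$ from Theorem~\ref{invLBP} and Corollary~\ref{cor: invLBP}. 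Feeding $N_{q^n|p^d}(C)=N_{q|p^d}(C)^n$ and the argument $a^{-1}x$ into the explicit formula produces precisely the expression in~(ii).

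The substantive case is~(iii), where $C(x)\neq0$ but $N_{q|p^d}(C(x))=1$, so $\varphi_y$ fails to permute $\Fn$ yet permutes $\ker(T)$; this is exactly the regime of Theorem~\ref{thm: InvKer2}, whose hypotheses $N_{q|p^d}(C)=1$ and $p\nmid n$ are in force. Here I would invoke part~(ii) of Lemma~\ref{lem: inverse coulter}, namely $f^{-1}(x) = n^{-1}\bar f^{-1}(T(x)) + \varphi_y|_{\ker(T)}^{-1}(x - n^{-1}T(x))$, and use $\varphi_y|_{\ker(T)}^{-1}(w) = L_{C,r}|_{\ker(T)}^{-1}(a^{-1}w)$ to feed the argument into the closed form of Theorem~\ref{thm: InvKer2}. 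The simplification is where the work concentrates: I would expand $L_{C,r}|_{\ker(T)}^{-1}$ at $a^{-1}(x - n^{-1}T(x))$, push the scalars $a^{-1},n^{-1}\in\F$ through the $q$-power exponents $p^{kmr/d}=q^{kr/d}$ (using $T(x)\in\F$, so that $T(x)^{q^{kr/d}}=T(x)$), and invoke the key combinatorial fact that $(r/d,n)=1$ — a consequence of $d=(m,r)=(mn,r)$ — which makes $\sum_{k=0}^{n-1}x^{q^{kr/d}}$ genuinely equal to $T_{q^n|q}(x)$ and thereby lets the projection term collapse against the trace.

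I expect the main obstacle to be precisely this bookkeeping in case~(iii): controlling how the Frobenius twists $p^{jr}$ interact with the weights $C(x)^{-(p^{(j+1)r}-1)/(p^r-1)}$, and verifying that the contribution of the component $n^{-1}T(x)\in\F$ disappears rather than surviving as a spurious additive constant. The cleanest safeguard, which I would use to confirm the final closed form, is to verify directly that $f$ returns the claimed preimage to $x$, mirroring the telescoping computation already carried out in the proof of Theorem~\ref{thm: InvKer}; there the norm condition $N_{q|p^d}(C)=1$ is exactly what forces the boundary terms of the telescoping sum to cancel and regenerates the trace, so that the additive correction $n^{-1}\bar f^{-1}(T(x))$ recombines with the $\ker(T)$-part to reconstruct $x$ on the nose.
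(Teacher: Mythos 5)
Your proposal is correct and follows essentially the same route as the paper's proof: the same identification $\varphi_y(x) = a\,L_{C,r}(x)$ with $C(x)$ as stated, the same three-way case split resolved by Lemma~\ref{lem: inverse coulter}\,(i), (i), and (ii) respectively, together with Corollary~\ref{cor: invLBP} for case (ii) (including the norm upgrade $N_{q|p^d}(C(x))^n \neq 1$ via $(n, p^d-1)=1$ as in Corollary~\ref{cor: InvKer2}) and Theorem~\ref{thm: InvKer2} for case (iii). The step you leave as a plan --- showing the contribution of $n^{-1}T(x)$ vanishes in case (iii) --- is exactly what the paper carries out, by substituting $a^{-1}T(x) = \bar{f}^{-1}(T(x))^{p^r} - C(x)\,\bar{f}^{-1}(T(x))$ (a consequence of $\bar{f}(y) = yG(y)$) into the inverse's closed form and telescoping, with the boundary term killed by $N_{q|p^d}(C(x)) = 1$, which is precisely the mechanism you point to.
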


\begin{proof}
Write $f(x) = ax^{p^r} - xg(T(x))$ where $g(x) := ax^{p^r-1} - G(x)$.
 As we have seen in the proof of Theorem \ref{thm: improv}, the fact that $f$ is a PP over $\Fn$ implies that $\bar{f}(x) := x G(x)$ is a PP over $\F$ and 
 $\varphi_y(x) := ax^{p^r} - xg(y) = a(x^{p^r} - xg(y)/a) = aL_{g(y)/a, r}(x)$ induces a permutation of $\ker(T)$ for each $y \in \F$. 
 Note that $\varphi_y|_{\ker(T)}^{-1}(x) = L_{g(y)/a, r}|_{\ker(T)}^{-1}(a^{-1}x)$, where $L_{c,r}(x) := x^{p^r} - cx$ for $c \in \F$. 
 Moreover $C(x) = g(\bar{f}^{-1}(T(x)))/a$.
 
 (i): If $x \in \Fn$ is such that $C(x) = 0$, then $\varphi_{\bar{f}^{-1}(T(x))}(x) = a L_{C(x), r}(x) = ax^{p^r}$ permutes $\Fn$. Now Lemma \ref{lem: inverse coulter} (i) gives
 $$
 f^{-1}(x) = \varphi_{\bar{f}^{-1}(T(x))}^{-1}(x) = L_{g(y)/a, r}|^{-1}(a^{-1}x) = \left( \dfrac{x}{a} \right)^{\frac{q^n}{p^r}}.
 $$
 
(ii) Otherwise if $x \in \Fn$ is such that $N_{q|p^d}(C(x)) \neq 1$, 
 then, by similar arguments to those in the proof of Corollary \ref{cor: InvKer2}, we get
 $$
 N_{q^n|p^d}\left( \dfrac{g\left(\bar{f}^{-1}(T(x))   \right)}{a}   \right) = N_{q|p^d}\left( \dfrac{g\left(\bar{f}^{-1}(T(x))   \right)}{a}   \right)^n  \neq 1.
 $$
 Then by Corollary \ref{cor: invLBP}, $L_{g(\bar{f}^{-1}(T(x)))/a, r}$, and hence $\varphi_{\bar{f}^{-1}(T(x))}$, permute $\Fn$. 
 Since $\varphi_y(\F) \subseteq \F$ for each $y \in \F$, necessarily $\varphi_{\bar{f}^{-1}(T(x))}$ permutes $\F$.
 Thus we can apply Lemma \ref{lem: inverse coulter} (i) to obtain
 $$
 f^{-1}(x) = \varphi_{\bar{f}^{-1}(T(x))}^{-1}(x) = L_{g(\bar{f}^{-1}(T(x)))/a, r}^{-1}\left(a^{-1}x\right).
 $$
 Substituting $c$ with $g(\bar{f}^{-1}(T(x)))/a$ in Corollary \ref{cor: invLBP}, we get
 $$
 f^{-1}(x) = \dfrac{N_{q^n|p^d}\left(\dfrac{g\left(\bar{f}^{-1}(T(x))\right)}{a}  \right)} {1 - N_{q^n|p^d}\left(\dfrac{g\left(\bar{f}^{-1}(T(x))\right)}{a}  \right)}\sum_{i=0}^{\frac{nm}{d}-1}
\left(\dfrac{g\left(\bar{f}^{-1}(T(x))\right)}{a}  \right)^{-\frac{p^{(i+1)r}-1}{p^r-1}}\left( a^{-1}x \right)^{p^{ir}}.
 $$
 Now the result follows from the fact that $N_{q^n|p^d}(y) = N_{q|p^d}(y)^n$ for any $y \in \F$.
 
 (iii) Here we can use Theorem \ref{thm: InvKer2} to obtain the inverse of $L_{g(\bar{f}^{-1}(T(x)))/a,r}|_{\ker(T)}$. Since $L_{g(\bar{f}^{-1}(T(x)))/a,r}$, 
 and hence $\varphi_{\bar{f}^{-1}(T(x))}$, do not permute $\Fn$, we apply Lemma \ref{lem: inverse coulter} (ii) instead. This gives
 \begin{align*}
 f^{-1}(x) 
 &= 
 n^{-1} \bar{f}^{-1}(T(x)) + \varphi_{\bar{f}^{-1}(T(x))}|_{\ker(T)}^{-1}\left( x - n^{-1}T(x)  \right)\\
 &=
 n^{-1} \bar{f}^{-1}(T(x)) + L_{g\left(\bar{f}^{-1}(T(x))\right)/a, r}|_{\ker(T)}^{-1}\left( a^{-1}\left( x - n^{-1}T(x)  \right) \right).
 \end{align*}
 For the sake of brevity denote $\bar{L}(z) := L_{g\left(\bar{f}^{-1}(T(x))\right)/a, r}|_{\ker(T)}^{-1}(z) \in \F[z]$. 
 Since $\bar{L}$ is a $p$-polynomial, the above becomes
$$
 f^{-1}(x) = n^{-1} \bar{f}^{-1}(T(x)) + \bar{L}\left( a^{-1} x \right)
 - \bar{L} \left(a^{-1}n^{-1} T(x)  \right).
$$
 By the definition of $\bar{L}$ and by Theorem \ref{thm: InvKer2} with $c = g\left(\bar{f}^{-1}(T(x))\right)/a$, we get
\begin{align*}
 \bar{L} \left(a^{-1}n^{-1}  T(x)  \right) 
 &= 
  \sum_{j=0}^{\frac{m}{d}-1} \left( \dfrac{g\left(\bar{f}^{-1}\left(T(x)\right)\right)}{a} \right)^{-\frac{p^{(j+1)r} - 1}{p^r - 1}} 
 \left(  n^{-2} \sum_{k=1}^{n - 1} k  a^{-1} T(x) \right)^{p^{jr}}\\
 &=
 \dfrac{n^{-1}(n-1)}{2}\sum_{j=0}^{\frac{m}{d}-1} \left( \dfrac{g\left(\bar{f}^{-1}\left(T(x)\right)\right)}{a} \right)^{-\frac{p^{(j+1)r} - 1}{p^r - 1}} \left( a^{-1}T(x) \right)^{p^{jr}}.
 \end{align*}
Since $\bar{f}(y) = y G(y) = ay^{p^r} - yg(y)$ for any $y \in \F$, we have $y  = a \bar{f}^{-1}(y)^{p^r} - \bar{f}^{-1}(y)g(\bar{f}^{-1}(y))$. Then,
if we substitute $y$ with $T(x)$, the above becomes
\begin{align*}
 \bar{L} \left(a^{-1}n^{-1}  T(x)  \right) 
 &=
 \dfrac{n^{-1}(n-1)}{2}\sum_{j=0}^{\frac{m}{d}-1} \left( \dfrac{g\left(\bar{f}^{-1}(T(x))\right)}{a} \right)^{-\frac{p^{(j+1)r} - 1}{p^r - 1}}\\
 & \hspace{5em}
 \cdot \left( \bar{f}^{-1}(T(x))^{p^r} - \bar{f}^{-1}(T(x))\dfrac{g\left(\bar{f}^{-1}(T(x))\right)}{a}  \right)^{p^{jr}}\\
 &=
 \dfrac{n^{-1}(n-1)}{2}\sum_{j=0}^{\frac{m}{d}-1} \Biggl( \bar{f}^{-1}(T(x))^{p^{(j+1)r}} \left( \dfrac{g\left(\bar{f}^{-1}(T(x))\right)}{a} \right)^{-\frac{p^{(j+1)r} - 1}{p^r - 1}}\\ 
 & 
 \hspace{10em} - \bar{f}^{-1}(T(x))^{p^{jr}} \left( \dfrac{g\left(\bar{f}^{-1}(T(x))\right)}{a} \right)^{-\frac{p^{jr} - 1}{p^r - 1}}\Biggr)\\
 &=
 \dfrac{n^{-1}(n-1)}{2}\bar{f}^{-1}(T(x)) \left(\left( \dfrac{g\left(\bar{f}^{-1}(T(x))\right)}{a} \right)^{-\frac{p^{mr/d} - 1}{p^r - 1}}  - 1  \right)\\
 &=
 0
 \end{align*}
because, by Lemma \ref{lem: norm identity} (ii) and by assumption,
$$
\left( \dfrac{g\left(\bar{f}^{-1}(T(x))\right)}{a} \right)^{\frac{p^{mr/d} - 1}{p^r - 1}} 
= \left( \dfrac{g\left(\bar{f}^{-1}(T(x))\right)}{a} \right)^{\frac{p^m - 1}{p^d - 1}} = 1.
$$
Thus
\begin{align*}
f^{-1}(x) &= 
n^{-1}\bar{f}^{-1}(T(x)) + L_{g\left(\bar{f}^{-1}(T(x))\right)/a, r}|_{\ker(T)}^{-1}\left(a^{-1}x  \right)\\
&=
n^{-1}\bar{f}^{-1}(T(x)) + \sum_{j=0}^{\frac{m}{d}-1} \left( \dfrac{g\left(\bar{f}^{-1}\left(T(x)\right)\right)}{a} \right)^{-\frac{p^{(j+1)r} - 1}{p^r - 1}} 
 \left(  n^{-1} \sum_{k=1}^{n - 1} k  a^{-1} x^{p^{km\frac{r}{d}}} \right)^{p^{jr}}.
\end{align*}
The result follows.
\end{proof}

\begin{cor}\label{cor: CPP inverse}
 If the polynomial $f$ in Theorem \ref{thm: improv} is a permutation polynomial over $\Fn$, then its compositional inverse over $\Fn$ is given by
 \begin{align*}
  f^{-1}(x) 
  = &
  \left( 1 - C(x)^{q-1}   \right)\left( \dfrac{x}{a}  \right)^{\frac{q^n}{p^r}}\\
 & + C(x)^{q-1}\left( N_{q|p^d}\left( C(x)   \right)   - 1 \right)^{p^d-1}\dfrac{N_{q|p^d}\left(C(x) \right)^n} {1 - N_{q|p^d}\left(C(x) \right)^n}\\
 &  \cdot \sum_{i=0}^{\frac{mn}{d}-1} C(x)^{-\frac{p^{(i+1)r} -1}{p^r-1}} \left(a^{-1}x  \right)^{p^{ir}}\\
 & + \left( 1 - \left(N_{q|p^d}\left(C(x)\right)  -1 \right)^{p^d-1}\right)\\
 &  \cdot n^{-1}\left( \bar{f}^{-1}(T(x)) + \sum_{j=0}^{\frac{m}{d}-1}  C(x)^{-\frac{p^{(j+1)r} - 1}{p^r - 1}}
 \left(  a^{-1} \sum_{k=1}^{n - 1} k   x^{p^{km\frac{r}{d}}} \right)^{p^{jr}}\right).
 \end{align*}
\end{cor}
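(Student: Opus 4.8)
The plan is to read off Corollary~\ref{cor: CPP inverse} directly from the three-case description of the preimage map in Theorem~\ref{thm: CPP inverse}, assembling the three branches into a single polynomial by means of finite-field indicator functions. Since $f$ permutes $\Fn$, the map sending each $x$ to its preimage $f^{-1}(x)$ is exactly the compositional inverse, so it suffices to check that the displayed formula agrees, at every $x \in \Fn$, with whichever of the parts (i), (ii), (iii) of Theorem~\ref{thm: CPP inverse} governs that $x$. First I would introduce the two indicators $A := C(x)^{q-1}$ and $B := \left(N_{q|p^d}(C(x)) - 1\right)^{p^d-1}$. Recalling that $z^{q-1} = 1$ for $z \in \F^*$ while $0^{q-1} = 0$, and likewise $w^{p^d-1} = 1$ for $w \in \mathbb{F}_{p^d}^*$ while $0^{p^d-1} = 0$, and noting $C(x) \in \F$ and $N_{q|p^d}(C(x)) \in \mathbb{F}_{p^d}$, both $A$ and $B$ take values in $\{0,1\}$.

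The next step is to observe how $A$ and $B$ separate the three regimes: $A = 0$ holds precisely when $C(x) = 0$, the regime of Theorem~\ref{thm: CPP inverse}(i); $A = 1$ together with $B = 1$ holds precisely when $C(x) \neq 0$ and $N_{q|p^d}(C(x)) \neq 1$, the regime of (ii); and $A = 1$ together with $B = 0$ holds precisely when $C(x) \neq 0$ and $N_{q|p^d}(C(x)) = 1$, the regime of (iii). A point worth recording at this stage is that when $C(x) = 0$ one has $N_{q|p^d}(C(x)) = 0$, whence $B = (-1)^{p^d-1} = 1$; thus the factor $(1-B)$ multiplying the third summand already vanishes on the case-(i) locus, so no extra masking by $A$ is needed there. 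With this in hand, matching terms is immediate: the first summand is $(1-A)$ times the case-(i) expression and survives only when $A = 0$; the second summand is $AB$ times the case-(ii) expression and survives only when $A = B = 1$; the third summand is $(1-B)$ times the case-(iii) expression and, by the observation just made, survives only on the case-(iii) locus. Running through the three cases, exactly one summand is nonzero in each and coincides with the corresponding branch of Theorem~\ref{thm: CPP inverse}, which yields the formula.

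The step demanding the most care, and the main obstacle, is verifying well-definedness of each summand on the loci where it is masked off, under the paper's convention that inverses and negative powers are evaluated as $w^{-1} := w^{q-2}$ (respectively $w^{p^d-2}$ in $\mathbb{F}_{p^d}$), a convention that sends $0 \mapsto 0$. The sensitive factor is $N_{q|p^d}(C(x))^n / \left(1 - N_{q|p^d}(C(x))^n\right)$ in the second summand: on the case-(iii) locus $N_{q|p^d}(C(x)) = 1$ makes the denominator vanish, but there $B = 0$ already annihilates the summand, and in any event the convention evaluates the quotient as $0$; conversely, on the case-(ii) locus Theorem~\ref{thm: CPP inverse}(ii) guarantees $N_{q|p^d}(C(x))^n \neq 1$, so the quotient is a genuine inverse. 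The negative powers $C(x)^{-(p^{(i+1)r}-1)/(p^r-1)}$ and the $\bar{f}^{-1}$-terms are handled the same way on the $C(x)=0$ locus, since the governing indicator forces the whole summand to zero regardless of the value assigned to $0^{-k}$. Once these routine well-definedness checks are dispatched, the three-case comparison with Theorem~\ref{thm: CPP inverse} closes the argument.
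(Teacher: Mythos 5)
Your proposal is correct and follows essentially the same route as the paper's own proof, which simply assembles the three cases of Theorem~\ref{thm: CPP inverse} into a single ``step function'' in which exactly one term is nonzero at a time. Your explicit verification that the indicators $C(x)^{q-1}$ and $\left(N_{q|p^d}(C(x))-1\right)^{p^d-1}$ take values in $\{0,1\}$ and correctly separate the three regimes (in particular that the third term already vanishes when $C(x)=0$) is just a more careful writing-out of the same argument.
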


\begin{proof}
 We put the results of Theorem \ref{thm: CPP inverse} together. Note that this is a step function where only one of the three terms is non-zero at a time. 
 The first term of the expression is non-zero only when $C(x) = 0$ corresponding to the result in (i),
 while the second term is non-zero only when $C(x) \neq 0$ and $N_{q|p^d}(C(x)) \neq 1$ corresponding to (ii).
 The third term is non-zero otherwise; this corresponds to the result in (iii). These are the only possibilities for $C(x)$ and so we are done.
\end{proof}

Corollary \ref{cor: CPP inverse} generalizes \cite[Corollary 4.6]{tuxanidy} and \cite[Theorem 3.5]{wu: even cpp}.

\section{A class of mutually orthogonal Latin squares}

In this section we construct a class of mutually orthogonal Latin squares by the means of the class of complete mappings in Theorem \ref{thm: improv}.
We give the result in Theorem \ref{thm: MOLS}. Our class generalizes that of Theorem 5.5 in \cite{simona}.

Let us recall some definitions. 
A {\em quasigroup operation} $*$ on a set $G$ is a binary operation such that the equations 
$x * u = v$ and $u * y = v$ have a unique solution $x,y$ for every $u,v \in G$. We call $(G, *)$ a {\em quasigroup} of order $|G|$. The multiplication table of a quasigroup is called a {\em Latin square}.
A mapping $Q : G \times G \rightarrow G$ defines a quasigroup if the binary operation $u * v = Q(u,v)$ is a quasigroup operation on $G$. Two quasigroups $(G, *_1), (G, *_2)$ are {\em orthogonal} if the system
of equations $(x *_1 y, x*_2 y) = (r,s)$ has a unique solution $(x,y)$ for every $(r,s) \in G \times G$. 
In this case the multiplication tables of the two quasigroups are called {\em orthogonal Latin squares} (OLS).
A set of Latin squares such that each pair is mutually orthogonal is called a set of {\em mutually orthogonal Latin squares} (MOLS).

We need the following Theorem \ref{thm: sade} and Lemma \ref{lem: simona} dealing with fields of arbitrary characteristic. 
Theorem \ref{thm: sade} is due to Sade \cite{sade} and gives a method, known as the {\em diagonal method}, of constructing
Latin squares by using complete mappings. 
The case when the characteristic of the field is 2 was already considered in Theorem \ref{thm: sade} and Lemma \ref{lem: simona} of \cite{simona}, although the proofs are essentially identical.
We however add the proofs here for the convenience of the reader. 

\begin{thm}[{\bf \cite{sade}}]\label{thm: sade}
 Let $P$ be a complete mapping over $\F$. Then the mapping $Q : \F^2 \rightarrow \F$ given by
 $$
 Q(x,y) = P(x+y) + y
 $$
 defines a quasigroup that possesses at least one orthogonal mate, the group $(\F, +)$.
\end{thm}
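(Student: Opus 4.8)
The plan is to verify directly that the map $Q(x,y) = P(x+y) + y$ satisfies the two defining properties of a quasigroup operation, and then to exhibit the group $(\F,+)$ as an explicit orthogonal mate. The key observation is that $P$ is a complete mapping, which by definition means both $P(x)$ and $P(x) + x$ are permutation polynomials over $\F$; these two facts are exactly what will drive the two halves of the argument.

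First I would show $Q$ defines a quasigroup, i.e.\ that for fixed $u$ each of $x \mapsto Q(x,u)$ and $y \mapsto Q(u,y)$ is a bijection of $\F$. Fixing the second coordinate $y = u$, the map $x \mapsto Q(x,u) = P(x+u) + u$ is a composition of the translation $x \mapsto x+u$, the permutation $P$, and the translation by $u$, hence a bijection; so $x * u = v$ has a unique solution $x$. Fixing the first coordinate $x = u$, the map $y \mapsto Q(u,y) = P(u+y) + y$ is, after the substitution $z = u + y$ (a bijection), the map $z \mapsto P(z) + z - u$, which is a bijection precisely because $P(z) + z$ is a permutation of $\F$; so $u * y = v$ has a unique solution $y$. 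This handles the quasigroup claim and already uses both halves of the complete-mapping hypothesis.

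Next I would verify that $(\F,+)$, viewed as the quasigroup with operation $x *_2 y := x + y$, is orthogonal to $(\F, *_1)$ where $x *_1 y := Q(x,y)$. By the definition of orthogonality I must show that the system $Q(x,y) = r$, $x + y = s$ has a unique solution $(x,y)$ for every $(r,s) \in \F \times \F$. The second equation gives $x + y = s$, so $P(x+y) = P(s)$ is determined, and the first equation becomes $P(s) + y = r$, yielding $y = r - P(s)$ uniquely; then $x = s - y$ is determined as well. Conversely this $(x,y)$ clearly solves both equations, so the solution exists and is unique.

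The argument is essentially a sequence of elementary bijectivity checks, so there is no serious obstacle; the only thing to be careful about is bookkeeping which coordinate is held fixed and making sure the right half of the completeness hypothesis ($P(z)+z$ being a permutation) is invoked for the second quasigroup identity rather than the mere permutation property of $P$. I would present the orthogonality computation last, since it is the cleanest and most directly exhibits $(\F,+)$ as the promised orthogonal mate.
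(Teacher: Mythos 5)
Your proposal is correct and follows essentially the same route as the paper: both verify the quasigroup axioms directly from the two halves of the complete-mapping hypothesis (the paper states this step more tersely, while you spell out the substitution $z = u+y$), and both establish orthogonality to $(\F,+)$ by solving the system explicitly, arriving at the same unique solution $(x,y) = (P(s)+s-r,\, r-P(s))$.
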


\begin{proof}
 Since $P$ is a complete mapping, both $Q(u,y)$ and $Q(x,u)$ are permutations for each $u \in \F$. It follows that $Q$ defines a quasigroup. Now $(\F, Q)$ is orthogonal to $(\F, +)$ if and only if the equation
 $$
 (P(x+y) + y, x+y) = (r,s)
 $$
 has a unique solution $(x,y)$ for each $(r,s) \in \F^2$. This is equivalent to
 $$
 (x,y) = (P(s) + s - r, r - P(s))
 $$
 and so a unique solution exists (since $P$ is a complete mapping).
\end{proof}

The following lemma determines when two quasigroups, constructed in the same fashion as that of Theorem \ref{thm: sade}, are orthogonal to each other.

\begin{lem}\label{lem: simona}
 Let $P_1$ and $P_2$ be complete mappings over $\F$. Then the quasigroups corresponding to 
 \begin{align*}
  Q_1(x,y) &= P_1(x+y) + y\\
  Q_2(x,y) &= P_2(x + y) + y
 \end{align*}
are orthogonal if and only if $P_2 - P_1$ is a permutation over $\F$.
\end{lem}

\begin{proof}
 $(\F, Q_1)$ is orthogonal to $(\F, Q_2)$ if and only if 
 \begin{align*}
  P_1(x+y) + y &= r\\
  P_2(x+y) + y  &= s
 \end{align*}
has a unique solution $(x,y)$ for each $(r,s) \in \F^2$. The system is equivalent to
\begin{align*}
x &= P_1^{-1}(r-y) - y\\
x &= P_2^{-1}(s-y) - y.
\end{align*}
It follows that $(\F,Q_1)$ is orthogonal to $(\F, Q_2)$ if and only if 
$$
P_1^{-1}(r-y) = P_2^{-1}(s-y)
$$
has a unique solution for each $(r,s) \in \F^2$. This equation is equivalent to 
$$
r = y + P_1\circ P_2^{-1}(s-y)
$$
and hence to
$$
s - r = (s - y) -P_1 \circ P_2^{-1}(s- y) 
$$
which has a unique solution if and only if $I -P_1 \circ P_2^{-1} $ is a permutation over $\F$. Since $P_2$ is a permutation, the last occurs if and only if $P_2 - P_1$ is a permutation.
\end{proof}

We are now ready to give the construction of a set of mutually orthogonal Latin squares.

\begin{thm}\label{thm: MOLS}
 Let $q = p^m$ be a power of a prime number $p$ and let $n,r,$ be positive integers such that $(m,r) = (mn,r)$, $p \nmid n$ and $(n, p^{(m,r)} - 1) = 1$. 
 Let $\{b_i\}$ be a subset of $\F$ and let $G \in \F[x]$ such that each $\bar{f}_i(x) := x(G(x) + b_i)$ is a complete permutation polynomial over $\F$.
 For each such $b_i$, let $a_i \in \F^*$.
 Then the quasigroups, $(\Fn, Q_j)$, where
 $$
 Q_0(x,y) = x+y, \hspace{2em} Q_i(x,y) = P_i(x+y) + y
 $$
 with 
 $$
 P_i(x) = x\left(a_i x^{p^r-1} - a_iT_{q^n|q}(x)^{p^r - 1} + G(T_{q^n|q}(x)) + b_i \right), \hspace{1em} i \geq 1,
 $$
 form a set of $1 + |\{b_i\}|$ mutually orthogonal Latin squares of order $q^n$.
 \end{thm}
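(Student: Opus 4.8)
The plan is to establish the two properties required of a family of MOLS: that each $(\Fn,Q_i)$ is a quasigroup orthogonal to $Q_0=(\Fn,+)$, and that the quasigroups $(\Fn,Q_i)$ with $i\geq 1$ are pairwise orthogonal. For the first property I would recognize each $P_i$ as an instance of the complete mapping in Theorem~\ref{thm: improv}: setting $a=a_i$ there and replacing the polynomial $G$ by $G+b_i$ yields exactly
\[
P_i(x)=a_ix^{p^r}-a_ix\,T_{q^n|q}(x)^{p^r-1}+x\,G(T_{q^n|q}(x))+b_ix.
\]
Since the hypotheses $(m,r)=(mn,r)$, $p\nmid n$ and $(n,p^{(m,r)}-1)=1$ are precisely those of Theorem~\ref{thm: improv}, and the corresponding base polynomial $x(G(x)+b_i)=\bar f_i(x)$ is a CPP over $\F$ by assumption, Theorem~\ref{thm: improv} shows that each $P_i$ is a complete mapping over $\Fn$. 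Theorem~\ref{thm: sade} then guarantees that every $(\Fn,Q_i)$ is a quasigroup possessing the orthogonal mate $(\Fn,+)=(\Fn,Q_0)$.

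For the mutual orthogonality of $Q_i$ and $Q_j$ with $i\neq j$ (both $\geq 1$) I would appeal to Lemma~\ref{lem: simona}, which reduces the task to proving that $P_j-P_i$ is a permutation of $\Fn$. The decisive simplification is that the index-independent trace term $x\,G(T_{q^n|q}(x))$ cancels in the difference, leaving
\[
P_j(x)-P_i(x)=(a_j-a_i)\left(x^{p^r}-x\,T_{q^n|q}(x)^{p^r-1}\right)+(b_j-b_i)x.
\]

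The main work is then to verify that this difference is a permutation polynomial. I would write it as $H(x)+x\,g(T_{q^n|q}(x))$ with the $p$-polynomial $H(x)=(a_j-a_i)x^{p^r}$ and $g(y)=(b_j-b_i)-(a_j-a_i)y^{p^r-1}$, and apply Lemma~\ref{lem: coulter}. Condition (ii) there requires $\bar h(x)=H(x)+x\,g(x)=(b_j-b_i)x$ to permute $\F$, which holds because the $b_i$ are distinct, so $b_j-b_i\neq 0$; condition (i) requires $\varphi_y(x)=(a_j-a_i)x^{p^r}+\bigl((b_j-b_i)-(a_j-a_i)y^{p^r-1}\bigr)x$ to permute $\ker(T_{q^n|q})$ for every $y\in\F$. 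When $a_i=a_j$ this is immediate, as $\varphi_y$ reduces to multiplication by $b_j-b_i\neq 0$; otherwise I would write $\varphi_y=(a_j-a_i)L_{c,r}$ with $c=y^{p^r-1}-(b_j-b_i)/(a_j-a_i)\in\F$, and invoke Corollary~\ref{cor: InvKer2}, which under our standing hypotheses says precisely that $L_{c,r}$ permutes $\ker(T_{q^n|q})$ for every $c\in\F$ (multiplication by the nonzero scalar $a_j-a_i$ preserves this since $\ker(T_{q^n|q})$ is an $\F$-subspace). Hence $P_j-P_i$ is a permutation, Lemma~\ref{lem: simona} gives $Q_i\perp Q_j$, and combined with orthogonality to $Q_0$ the $1+|\{b_i\}|$ squares are mutually orthogonal. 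The one point requiring care is to resist routing the permutation test through Corollary~\ref{cor: improv}, which would impose the spurious constraint $b_j-b_i\neq -1$: because we only need $P_j-P_i$ to be a permutation and not a complete mapping, the weaker criterion of Lemma~\ref{lem: coulter} is exactly what removes that restriction.
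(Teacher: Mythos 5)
Your proof is correct and follows the paper's overall strategy---Theorem \ref{thm: improv} to see each $P_i$ is a complete mapping, Theorem \ref{thm: sade} for orthogonality with $(\Fn,+)$, and Lemma \ref{lem: simona} to reduce pairwise orthogonality to $P_j-P_i$ being a permutation---but you justify that last step differently, and your version is actually tighter. The paper concludes that
$$
(P_j-P_i)(x)=(a_j-a_i)\left(x^{p^r}-xT_{q^n|q}(x)^{p^r-1}\right)+(b_j-b_i)x
$$
is a permutation of $\Fn$ by citing Corollary \ref{cor: improv} ``since $b_j\neq b_i$''; but that corollary, as stated, requires $b_j-b_i\notin\{0,-1\}$, so the citation does not literally cover the case $b_j-b_i=-1$. (For odd $p$ one can patch this: writing $f_b(x):=(a_j-a_i)(x^{p^r}-xT_{q^n|q}(x)^{p^r-1})+bx$, the corollary makes $f_{-2}$ a CPP, hence $f_{-1}=f_{-2}+x$ a PP; but in characteristic $2$, where $-1=1$, this patch is unavailable since it would need $f_0$.) Your route---applying Lemma \ref{lem: coulter} directly to the difference, with condition (ii) reducing to the permutation $(b_j-b_i)x$ of $\F$ and condition (i) reducing, via the scalar factor $a_j-a_i$ and Corollary \ref{cor: InvKer2}, to $L_{c,r}$ permuting $\ker(T_{q^n|q})$ for every $c\in\F$---needs only $b_j\neq b_i$, exactly as the theorem asserts, and so closes this small gap. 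The trade-off is a few extra lines in place of a one-line citation, but what it buys is an argument valid for all distinct $b_i,b_j$ in every characteristic.
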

 
 \begin{proof}
  By Theorem \ref{thm: improv}, each $P_i$ is a CPP over $\Fn$ since each $\bar{f}_i$ is a CPP over $\F$ by assumption. Moreover, if $i \neq j$,
  $$
  \left(P_j - P_i\right)(x) = x\Bigl[  \left( a_j -a_i   \right) x^{p^r-1} -   \left( a_j -a_i   \right) T_{q^n|q}(x)^{p^r - 1} + (b_j - b_i) \Bigr]
  $$
  is a permutation over $\Fn$ by Corollary \ref{cor: improv} (since $b_j \neq b_i$). Now Lemma \ref{lem: simona} and Theorem \ref{thm: sade} imply the result.
 \end{proof}
 
 Theorem \ref{thm: MOLS} generalizes \cite[Theorem 5.5]{simona}.

 \section{A $p$-ary bent vectorial function from the Maiorana-McFarland class}
 
 As another application of the permutation class obtained in Theorem \ref{thm: improv}, in this section we construct a class of $p$-ary bent vectorial functions by the means of the
 Maiorana-McFarland class. Our result, given in Theorem \ref{thm: bent}, generalizes that of Theorem 5.9 in \cite{simona}.
 
 For a prime $p$, let $\xi_p = e^{2\pi \sqrt{-1}/p}$. The {\em Walsh transform} of a function $f : \mathbb{F}_{p^n} \rightarrow \mathbb{F}_p$ is the complex-valued function
 $\hat{f}$ defined by
 $$
 \hat{f}(b) = \sum_{x \in \Fpn} \xi_p^{f(x) + T_{p^n|p}(bx)}.
 $$
 The function $f$ is called a $p$-ary {\em bent function} if $|\hat{f}(b)| = \sqrt{p^n}$ for all $b \in \Fpn$ (see for example \cite{pott}). 
 Bent functions have the minimum correlation to the class of affine functions (or maximum {\em non-linearity} possible), an important concept in cryptography \cite{nyberg}. 
 The construction of bent functions have been a focus of attention in several works. See for instance \cite{pott, ribic, nyberg, simona, stanica}.
 It was noted by Nyberg in \cite{nyberg} that complete mappings are useful in the construction of bent functions.
 
 In general, a $(n,m)$-vectorial function $F = (f_0, f_1, \ldots, f_{m-1}) : \Fp ^n \rightarrow \Fp^m$ is called {\em $(n,m)$-bent} if any non-zero linear combination of its components
 is a bent function. An amply studied class of bent functions in the literature is the Maiorana-McFarland class (see the aforementioned citations).
 These are the functions $f : \Fpn^2 \rightarrow \Fp$ with form
 $$
 f(x,y) = T_{p^n|p}\left( x \pi(y) + g(y)  \right)
 $$
 where $\pi,g,$ are functions on $\Fpn$. The condition that $\pi$ is a permutation of $\Fpn$ is both necessary and sufficient for $f$ to be bent. 
 But in general we have the following:
 
 \begin{lem}[\cite{nyberg}]\label{lem: nyberg}
  The function $F = (f_0, f_1, \ldots, f_{m-1}) : \mathbb{F}_{p^n}^2 \rightarrow \mathbb{F}_{p}^m$, where each of the components $f_i$ is a Maiorana-McFarland function
  $$
  f_i(x,y) = T_{p^t|p}\left(x \pi_i(y) + g_i(y)   \right),
  $$
  is a $(2n, m)$-bent function if every non-zero linear combination over $\Fp$ of the functions $\pi_i$, where $i \in \{0,1, \ldots, m-1  \}$, is a permutation of $\mathbb{F}_{p^n}$.
 \end{lem}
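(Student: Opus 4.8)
The plan is to reduce the vectorial bentness of $F$ to the scalar bentness of a single Maiorana--McFarland function, exploiting the $\Fp$-linearity of the trace $T_{p^n|p}$. By definition $F$ is $(2n,m)$-bent precisely when every non-zero linear combination $\sum_{i=0}^{m-1} c_i f_i$, with $(c_0,\dots,c_{m-1}) \in \Fp^m \setminus \{0\}$, is a bent function on $\Fpn^2$. So the first step is to fix such a coefficient vector and compute the combination explicitly.

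Since each $c_i$ lies in $\Fp$ and $T_{p^n|p}$ is $\Fp$-linear, I would pull the sum inside the trace, obtaining
\[
\sum_{i=0}^{m-1} c_i f_i(x,y) = T_{p^n|p}\!\left( x \sum_{i=0}^{m-1} c_i \pi_i(y) + \sum_{i=0}^{m-1} c_i g_i(y) \right) = T_{p^n|p}\big( x\, \pi(y) + g(y) \big),
\]
where $\pi := \sum_i c_i \pi_i$ and $g := \sum_i c_i g_i$. Thus any non-zero linear combination of the components of $F$ is again a Maiorana--McFarland function, whose permutation part is exactly the corresponding $\Fp$-combination $\pi$ of the $\pi_i$. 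The hypothesis guarantees that this $\pi$ is a permutation of $\Fpn$ whenever the coefficient vector is non-zero.

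The final step invokes the characterization of the Maiorana--McFarland class recalled just above the statement: $T_{p^n|p}(x\pi(y) + g(y))$ is bent if and only if $\pi$ is a permutation. Applying this to each non-zero $(c_i)$ shows that every non-zero linear combination of the components of $F$ is bent, whence $F$ is $(2n,m)$-bent. If a self-contained scalar argument is preferred over the cited characterization, I would instead evaluate the Walsh transform of the combination $h(x,y) := T_{p^n|p}(x\pi(y)+g(y))$ directly: for $(a,b) \in \Fpn^2$,
\[
\widehat{h}(a,b) = \sum_{y} \xi_p^{T_{p^n|p}(g(y) + by)} \sum_{x} \xi_p^{T_{p^n|p}\left(x(\pi(y)+a)\right)},
\]
where the inner character sum equals $p^n$ when $\pi(y) = -a$ and $0$ otherwise; since $\pi$ is a bijection exactly one $y$ contributes, giving $|\widehat{h}(a,b)| = p^n = \sqrt{p^{2n}}$, i.e. bentness.

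I do not expect a genuine obstacle: the entire content is the interchange of the $\Fp$-linear combination with the trace in the second paragraph, which converts a non-zero linear combination of the $f_i$ into a single Maiorana--McFarland function. The only point deserving care is matching the two notions of \emph{non-zero}, namely that a non-zero coefficient vector $(c_i)$ on the component side corresponds precisely to the non-zero $\Fp$-combination of the $\pi_i$ assumed to be a permutation; once this is noted, bentness follows immediately.
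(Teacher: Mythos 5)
Your proposal is correct, and it is exactly the argument this lemma rests on: the paper itself gives no proof (it cites the result to Nyberg), but the intended justification is precisely your reduction --- pull the $\Fp$-linear combination inside the trace to get a single Maiorana--McFarland function $T_{p^n|p}(x\pi(y)+g(y))$ with $\pi = \sum_i c_i\pi_i$, then invoke the bentness criterion ($\pi$ a permutation) stated immediately before the lemma, or equivalently your self-contained Walsh-transform computation. The only incidental point is that you silently (and correctly) read the paper's $T_{p^t|p}$ as the typo it is for $T_{p^n|p}$.
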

 
The following application of the complete mapping of Theorem \ref{thm: improv} gives a new class of $p$-ary bent vectorial functions through the means of the Maiorana-McFarland class. 

 \begin{thm}\label{thm: bent}
  Let $q = p^m$ be a power of a prime number $p$ and let $n,r,$ be positive integers such that $(m,r) = (mn,r)$, $p \nmid n$ and $(n, p^{(m,r)} - 1) = 1$. 
  Let $S$ be a subspace of $\F$ over $\mathbb{F}_p$ of dimension $k \leq m$ and with some basis $\{\alpha, \alpha^p, \ldots, \alpha^{p^{k-1}} \}$. 
  Let $G \in \F[x]$ be such that $G(0) \not\in S\setminus\{0\}$ and $\bar{f}(x) := x(G(x) + b)$ is a permutation polynomial over $\F$ for each $b \in S \setminus \{0\}$. 
  For every $i \in \{0, 1, \ldots,k-1  \}$, let
  $$
  \pi_i(x) := x\left( a_i x^{p^r-1} - a_iT_{q^n|q}(x)^{p^r - 1} + G(T_{q^n|q}(x)) + \alpha^{p^i}  \right)
  $$
  for some $a_i \in \F$. Then the function $F = (f_0,f_1,\ldots,f_{k-1}) : \Fn^2 \rightarrow \mathbb{F}_p^k$, where
  $$
  f_i(x,y) := T_{q^n|p}\left( x \pi_i(y) + g_i(y)  \right),
  $$
  is a $(2mn, k)$-bent function.
 \end{thm}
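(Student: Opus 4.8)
The plan is to deduce the bentness of $F$ from Lemma \ref{lem: nyberg} by verifying a single permutation condition, which I then settle using the Coulter--Henderson--Mathews criterion (Lemma \ref{lem: coulter}) together with Corollary \ref{cor: InvKer2}. Since $\Fn = \mathbb{F}_{p^{mn}}$ is an $mn$-dimensional $\mathbb{F}_p$-space and $T_{q^n|p} = T_{p^{mn}|p}$, applying Lemma \ref{lem: nyberg} with its parameters $n,t,m$ taken to be $mn,mn,k$ shows that $F$ is $(2mn,k)$-bent provided every non-zero $\mathbb{F}_p$-linear combination $\Pi := \sum_{i=0}^{k-1} c_i \pi_i$, with $(c_0,\ldots,c_{k-1}) \in \mathbb{F}_p^k \setminus \{0\}$, permutes $\Fn$. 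Writing $A := \sum_i c_i a_i$, $C := \sum_i c_i$, $\beta := \sum_i c_i \alpha^{p^i}$, and $T := T_{q^n|q}$, a direct expansion gives
$$\Pi(x) = A x^{p^r} + x\left(CG(T(x)) + \beta - A\,T(x)^{p^r-1}\right).$$
Because $\{\alpha,\alpha^p,\ldots,\alpha^{p^{k-1}}\}$ is an $\mathbb{F}_p$-basis of $S$ and $(c_i) \neq 0$, the element $\beta$ lies in $S \setminus \{0\}$; this is the key fact driving the whole argument.

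Now $\Pi$ has the shape $H(x) + x g(T(x))$ with the $p$-polynomial $H(x) = Ax^{p^r}$ and $g(y) = CG(y) + \beta - Ay^{p^r-1} \in \F[y]$, so I would invoke Lemma \ref{lem: coulter}. Its condition (ii) requires $\bar{f}(x) = xg(x) = x(CG(x)+\beta)$ to permute $\F$: when $C \neq 0$ this equals $C\cdot x(G(x)+\beta/C)$, a permutation since $\beta/C \in S \setminus \{0\}$ and each such $\bar{f}$ permutes $\F$ by hypothesis; when $C = 0$ it equals $\beta x$, a permutation since $\beta \neq 0$. Condition (i) asks that $\varphi_y(x) = Ax^{p^r} + x(CG(y)+\beta - Ay^{p^r-1})$ permute $\ker(T)$ for every $y \in \F$. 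When $A \neq 0$ I would write $\varphi_y = A\,L_{c_y,r}$ with $c_y = y^{p^r-1} - A^{-1}(CG(y)+\beta) \in \F$ and note that $L_{c_y,r}$ permutes $\ker(T)$ by Corollary \ref{cor: InvKer2} (whose hypotheses $d=(m,r)=(mn,r)$, $p\nmid n$, $(n,p^d-1)=1$ are precisely those assumed here), while scaling by $A \in \F^*$ preserves the $\F$-subspace $\ker(T)$.

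The remaining, and genuinely delicate, case is $A = 0$, where $\varphi_y(x) = (CG(y)+\beta)x$ is a scalar map and therefore permutes $\ker(T)$ if and only if $CG(y)+\beta \neq 0$ for all $y \in \F$. If $C = 0$ this is immediate, as $CG(y)+\beta = \beta \neq 0$. If $C \neq 0$, put $b := \beta/C \in S \setminus \{0\}$; I must show $G(y) \neq -b$ for every $y$. For $y \neq 0$ this holds because $G(y) = -b$ would force $\bar{f}(y) = y(G(y)+b) = 0 = \bar{f}(0)$, contradicting that $\bar{f}(x) = x(G(x)+b)$ permutes $\F$; for $y = 0$ it holds because $-b \in S \setminus \{0\}$ whereas the hypothesis $G(0) \notin S \setminus \{0\}$ forces $G(0) \neq -b$. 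This is exactly where the two otherwise unused hypotheses, $G(0) \notin S\setminus\{0\}$ and $\beta \in S \setminus \{0\}$, get consumed, and I expect this sub-case to be the main obstacle. With both conditions of Lemma \ref{lem: coulter} in hand, $\Pi$ permutes $\Fn$ for every non-zero choice of $(c_i)$, and Lemma \ref{lem: nyberg} then yields that $F$ is $(2mn,k)$-bent.
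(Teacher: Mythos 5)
Your proof is correct and follows essentially the same route as the paper: reduce via Lemma \ref{lem: nyberg} to showing every non-zero combination $\sum_i c_i\pi_i$ permutes $\Fn$, note that $\beta=\sum_i c_i\alpha^{p^i}\in S\setminus\{0\}$, and split on whether $A=\sum_i c_i a_i$ vanishes, the paper disposing of the two cases by citing Corollary \ref{cor: coulter} and Theorem \ref{thm: improv}/Corollary \ref{cor: improv}, while you inline the underlying Lemma \ref{lem: coulter} plus Corollary \ref{cor: InvKer2} argument that those results are built on. Your inlined version has the minor virtue of making explicit where the hypothesis $G(0)\notin S\setminus\{0\}$ is consumed (the case $A=0$, $C\neq 0$, $y=0$), which the paper leaves implicit in the condition $g(0)\neq 0$ of Corollary \ref{cor: coulter}.
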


 \begin{proof}
  By Lemma \ref{lem: nyberg} we need to show that every non-zero linear combination over $\mathbb{F}_p$ of the $\pi_i$'s is a permutation of $\Fn$.
  Let $(c_0,c_1,\ldots, c_{k-1}) \in \mathbb{F}_{p}^k \setminus\{0\}$ and note that $\sum_{i=0}^{k-1}c_i \alpha^{p^i} \neq 0$. 
  It follows that $x(\sum_{i=0}^{k-1}c_i G(x) + \sum_{i=0}^{k-1} c_i \alpha^{p^i})$ is a permutation of $\F$ (by our assumption on $\bar{f}$ as well).
  Then 
  $$
  \sum_{i=0}^{k-1}c_i \pi_i(x) = x\left( \sum_{i=0}^{k-1}c_ia_i x^{p^r-1} - \sum_{i=0}^{k-1} c_ia_i T_{q^n|q}(x)^{p^r-1} + \sum_{i=0}^{k-1} c_i G(T_{q^n|q}(x)) + \sum_{i=0}^{k-1} c_i \alpha^{p^i}  \right)
  $$
  is a permutation of $\Fn$ by Corollary \ref{cor: coulter} if $\sum_{i=0}^{k-1}c_i a_i = 0$, and by Theorem \ref{thm: improv} and Corollary \ref{cor: improv} otherwise. 
 \end{proof}

Theorem \ref{thm: bent} generalizes Theorem 5.9 in \cite{simona}.

\section{Conclusion}
In this paper we gave the compositional inverses of linearized binomials permuting the kernel of the trace map under the assumption that it does not permute the entire finite field in question.
Previously it had been obtained in \cite{wu} the compositional inverses of linearized permutation binomials. The significance of our result lies in its applications
to obtaining the compositional inverses of certain classes of permutation polynomials relying on the trace map, say the complete mapping of 
Theorem \ref{thm: improv}. The compositional inverse of this mapping was obtained in Section 4.
We also gave an improvement of a class of complete mappings 
recently given in \cite{wu and lin} as well as obtained a recursive construction of complete mappings involving multi-trace functions. 
Finally we used the improved complete mapping to construct a new class of mutually orthogonal Latin squares by means of the so called diagonal method, 
and to derive a $p$-ary bent
vectorial function from the Maiorana-McFarland class.

\end{document}